\documentclass[a4paper,11pt,reqno]{amsart}

\usepackage[
textwidth=15cm,
textheight=24cm,
hmarginratio=1:1,
vmarginratio=1:1]{geometry}

\usepackage{graphicx,color,amscd,amsmath,amssymb,verbatim}
\usepackage{amssymb}
\usepackage[english]{babel}
\newtheorem{thm}{Theorem}[section]
\newtheorem{lem}[thm]{Lemma}\newtheorem{prop}[thm]{Proposition}
\theoremstyle{definition}
 \theoremstyle{remark}

\numberwithin{equation}{section}

\newtheoremstyle{remboldstyle}
   {}{}{}{}{\bfseries}{.}{.5em}{{\thmname{#1 }}{\thmnumber{#2}}{\thmnote{ (#3)}}}
\theoremstyle{remboldstyle}

\theoremstyle{remboldstyle}
\newtheorem{examplebold}{Example}[section]

\newcommand{\D}{\mathbb D}
\newcommand{\C}{\mathbb C}

\theoremstyle{remark}

\numberwithin{equation}{section}
\numberwithin{theorem}{section}

%



\begin{document}

\title[]{On the exact value of the norm of the Hilbert matrix operator on weighted Bergman spaces}

\author[Lindstr{\"o}m] {Mikael Lindstr\"om}\address{Mikael Lindstr{\"o}m. Department of Mathematics, \AA bo Akademi University. FI-20500 \AA bo, Finland. \emph{e}.mail: mikael.lindstrom@abo.fi}
\author[Miihkinen] {Santeri Miihkinen} \address{Santeri Miihkinen. Department of Mathematics, \AA bo Akademi University. FI-20500 \AA bo, Finland. \emph{e}.mail: santeri.miihkinen@abo.fi}
\author[Wikman] {Niklas Wikman}\address{Niklas Wikman. Department of Mathematics, \AA bo Akademi University. FI-20500 \AA bo, Finland. \emph{e}.mail: niklas.wikman@abo.fi}

\begin{abstract} 
In this article, the open problem of finding the exact value of the norm of the Hilbert matrix operator on weighted Bergman spaces $A^p_\alpha$ is adressed. The norm was conjectured to be $\frac{\pi}{\sin \frac{(2+\alpha)\pi}{p}}$ by Karapetrovi\'{c}. We obtain a complete solution to the conjecture for $\alpha \ge 0$ and $2+\alpha+\sqrt{\alpha^2+\frac{7}{2}\alpha+3} \le p < 2(2+\alpha)$ and a partial solution for $2+2\alpha < p < 2+\alpha+\sqrt{\alpha^2+\frac{7}{2}\alpha+3}.$ Moreover, we also show that the conjecture is valid for small values of $\alpha$ when $2+2\alpha < p \le 3+2\alpha.$ Finally, the case $\alpha = 1$ is considered.
\end{abstract}

\subjclass[2010]{47B38, 30H20}

\keywords{Hilbert matrix operator, norm,  Beta function,  weighted Bergman spaces}

\maketitle
\section{Introduction}

The Hilbert matrix operator $\mathcal{H}$ is a linear integral operator that can be defined on several spaces of analytic functions on the open unit disk $\mathbb{D} = \{ z \in \mathbb{C} : |z| < 1\}$. 
Historically, Magnus \cite{3} was the first person to consider $\mathcal{H}$ as an operator on the space $\ell^2$ of square-summable complex sequences. 
The first results in the direction of estimating the norm of $\mathcal{H}$ on analytic function spaces were obtained by Diamantopoulos and Siskakis in \cite{1, 10}. In \cite{1}, they considered $\mathcal{H}$ acting on the Hardy spaces and established the boundedness of $\mathcal{H}$ for $1 < p < \infty$ in combination with upper estimates for the norm.
Aleman, Montes-Rodr\'{i}guez, and Sarafoleanu  \cite{1} provided a description of the point spectrum of   $\mathcal{H}$  acting on the  Hardy spaces.
 The starting point for the study of the Bergman space case was the article \cite{10}, in which  Diamantopoulos established an upper estimate for the norm of $\mathcal{H}$ for $4 \le p < \infty$ and a less precise estimate on the scale $2 < p < 4$. In \cite{7}, Dostani\'{c}, Jevti\'{c} and Vukoti\'{c} pursued the investigation of $\mathcal{H}$ on the Bergman space $A^p$ and Hardy space $H^p$ and obtained the exact value of the norm for $4 \leq p < \infty$, namely
\begin{equation*}
	\lVert \mathcal{H} \rVert_{A^p \to A^p} = \frac{\pi}{\sin \frac{2\pi}{p}}
\end{equation*}
and in the Hardy space case the precise value of the norm was proven to be $$\lVert \mathcal{H} \rVert_{H^p \to H^p}=\frac{\pi}{\sin \frac{\pi}{p}}$$ for $1 < p < \infty.$ 
They also conjectured that the value of the norm $\lVert \mathcal{H} \rVert_{A^p \to A^p} $ is the same 
in the $2 < p < 4$ case, see also \cite{12}. Bo\v{z}in and Karapetrovi\'{c} \cite{4} confirmed the conjecture in the positive by reducing the problem to certain novel estimates of the Beta function. In \cite{8} the authors simplified the proofs of the key lemmas in \cite{4} significantly by discarding the use of a classical theorem of Sturm. 

In this article, which is a continuation of \cite{8}, the results concerning the unweighted Bergman space case are generalized to the weighted Bergman spaces $A_{\alpha}^p$, where $\alpha \geq 0$. Karapetrovi\'{c} already considered $\mathcal{H}$ on $A_{\alpha}^p$ in \cite{6}, where he derives the exact value of the norm of $\mathcal{H}$ when $4\leq 2(2+\alpha) \leq p < \infty$, that is
\begin{equation*}
	\lVert \mathcal{H} \rVert_{A_{\alpha}^p \to A_{\alpha}^p} = \frac{\pi}{\sin \frac{(2+\alpha)\pi}{p}}
\end{equation*}
and obtains a better than known upper bound for the norm 
when $2 \leq 2 +\alpha < p < 2(2+\alpha)$. In \cite{6} Karapetrovi\'{c} conjectures that the norm of $\mathcal{H}$ is the same as above also in the case $2 < 2+\alpha \leq p < 2(2+\alpha)$.
In this article the conjecture is confirmed in the positive for $2+\alpha +\sqrt{\alpha^2+\frac{7}{2}\alpha +3} \le p < 2(2+\alpha)$. The main result of the article is as follows.

\begin{thm}\label{Main Theorem} Let $\alpha \geq 0$. Suppose that either of the following conditions holds:
	\begin{enumerate}
		\item[(a)] $2 +\alpha + \sqrt{\alpha^2+\frac{7}{2}\alpha + 3} \le p <2(2 +\alpha)$;
		\item[(b)] $2 + 2\alpha < p < 2 +\alpha + \sqrt{\alpha^2+\frac{7}{2}\alpha + 3}$ and 
		\begin{equation*}
			\int_0^1 I_t\left(\frac{2+\alpha}{p}, 1 - \frac{2+\alpha}{p}\right)t^{2p-4\alpha-5}(1-t^4)^\alpha dt - \frac{1}{4(\alpha +1)} \leq 0,
		\end{equation*}
		where $I_t$ is the  regularized incomplete Beta function.
	\end{enumerate}
	Then $\lVert \mathcal{H} \rVert_{A_{\alpha}^p \to A_{\alpha}^p} = \frac{\pi}{\sin{\frac{(2+\alpha)\pi}{p}}}$. 
\end{thm}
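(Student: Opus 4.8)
The plan is to follow the now-standard route for norm computations of $\mathcal{H}$ on Bergman-type spaces, adapting the approach of \cite{7,4,8} to the weighted setting. First I would record the lower bound $\lVert \mathcal{H} \rVert_{A_{\alpha}^p \to A_{\alpha}^p} \ge \frac{\pi}{\sin\frac{(2+\alpha)\pi}{p}}$, which is already known from the action of $\mathcal{H}$ on the family of power functions $f_c(z) = (1-z)^{-c}$ with $c$ close to $\frac{2+\alpha}{p}$; these are not in $A_{\alpha}^p$ but a truncation/approximation argument makes them legitimate test functions, so the whole content is the matching upper bound. For the upper bound, the key is that $\mathcal{H}$ can be written as the integral operator
\begin{equation*}
	\mathcal{H}f(z) = \int_0^1 f(t)\,\frac{1}{1-tz}\,dt = \int_0^1 \omega(t)\, T_t f(z)\, dt,
\end{equation*}
where $T_t$ is the weighted composition operator $T_t f(z) = \frac{1}{1-t+tz}\, f\!\left(\frac{t}{1-t+tz}\right)$ and $\omega(t) = \frac{1}{t}$ (up to the usual change of variables). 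Minkowski's integral inequality then gives
\begin{equation*}
	\lVert \mathcal{H}f \rVert_{A_{\alpha}^p} \le \int_0^1 \omega(t)\, \lVert T_t f \rVert_{A_{\alpha}^p}\, dt \le \lVert f \rVert_{A_{\alpha}^p} \int_0^1 \omega(t)\, \lVert T_t \rVert_{A_{\alpha}^p \to A_{\alpha}^p}\, dt,
\end{equation*}
and a direct change-of-variables computation of the norm of the weighted composition operator $T_t$ on $A_{\alpha}^p$ shows that $\int_0^1 \omega(t)\,\lVert T_t\rVert\,dt$ evaluates, via a Beta-function identity, to exactly $\frac{\pi}{\sin\frac{(2+\alpha)\pi}{p}}$ — but only on the range $p \ge 2(2+\alpha)$, which is why the naive argument fails below the threshold and a correction term is needed.

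For the subcritical range $2+2\alpha < p < 2(2+\alpha)$, the plan is to replace the crude bound $\lVert T_t f\rVert \le \lVert T_t\rVert\,\lVert f\rVert$ by a sharper pointwise/averaged estimate. Concretely, following \cite{8}, one fixes $f \in A_{\alpha}^p$ with $\lVert f\rVert_{A_{\alpha}^p} = 1$ and estimates $\lVert \mathcal{H}f\rVert_{A_{\alpha}^p}^p$ from above by expanding the $p$-th power, swapping integrals, and reducing to showing that a certain kernel obtained after integrating out the disk variable is dominated by the kernel corresponding to the extremal power function. After the reduction this becomes a one-variable inequality comparing an incomplete-Beta-function expression against the value $\frac{\pi}{\sin\frac{(2+\alpha)\pi}{p}}$; the extra freedom in $p$ versus the critical case is exactly accounted for by the integral condition in hypothesis (b). In case (a), the stronger lower bound $p \ge 2+\alpha+\sqrt{\alpha^2+\frac{7}{2}\alpha+3}$ on $p$ should be precisely what is needed to verify the integral inequality of (b) unconditionally, presumably by monotonicity in $p$ of the relevant Beta-function quotient together with an explicit estimate of $\int_0^1 I_t\big(\tfrac{2+\alpha}{p}, 1-\tfrac{2+\alpha}{p}\big)t^{2p-4\alpha-5}(1-t^4)^\alpha\,dt$; so Theorem \ref{Main Theorem}(a) follows from (b) once that verification is in place.

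The main obstacle I expect is the final Beta-function inequality — both establishing the reduction cleanly (keeping track of the weight $(1-|z|^2)^\alpha$ through all the changes of variables and the Minkowski step, since the weight makes the inner disk integral less transparent than in the unweighted case) and then proving that the resulting one-variable inequality holds on the stated $p$-range. In \cite{4} the analogous unweighted inequality required delicate estimates (originally invoking Sturm's comparison theorem, later removed in \cite{8}), and here the presence of $\alpha$ both shifts all the parameters and introduces the factor $(1-t^4)^\alpha$, so the monotonicity arguments in $t$ and in $p$ must be redone. I anticipate that verifying condition (b) for the range in (a) will hinge on a careful splitting of the $t$-integral near $t=1$ (where the incomplete Beta function is close to $1$) and a convexity or log-concavity property of $p \mapsto \frac{\pi}{\sin\frac{(2+\alpha)\pi}{p}}$, and that this is where most of the technical work lies; the operator-theoretic skeleton (integral representation, Minkowski, norm of $T_t$) is by now routine.
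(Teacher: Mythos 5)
Your operator-theoretic skeleton (integral representation of $\mathcal{H}$ through the weighted composition operators $T_t$, Minkowski's inequality, the pointwise estimate for $\lVert T_t f\rVert_{A^p_\alpha}$ over a disk/annulus, and the reduction via the integral means of $|f|^p$) is indeed the route the paper takes, and your explanation of why the naive bound $\int_0^1\lVert T_t\rVert\,dt$ only works for $p\ge 2(2+\alpha)$ is correct. But the heart of the proof is missing, and the plan as stated mischaracterizes what has to be proved after the reduction. Once you linearize the $1/p$-power (the paper uses $x^\beta-y^\beta\le\beta y^{\beta-1}(x-y)$ rather than ``expanding the $p$-th power'') and write the integral means as $\varphi(r)=\varphi(0)+\int_0^r\mathcal{X}'(s)\,ds$ with $\mathcal{X}'\ge 0$ arbitrary, you do not get a single one-variable inequality: you get a one-parameter \emph{family} of inequalities, namely $F_{\alpha,p}(s):=B\bigl(\tfrac{2+\alpha}{p},1-\tfrac{2+\alpha}{p}\bigr)H_{\alpha,p}(s)-\int_0^1\psi_{\alpha,p}(t)K_{\alpha,p}(s,t)\,dt\le 0$ for every $s\in[0,1]$, because the unknown $f$ enters through $\int_0^1\mathcal{X}'(s)F_{\alpha,p}(\sqrt{s})\,ds$ (plus the separate endpoint term $\varphi(0)F_{\alpha,p}(0)$). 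The integral condition in hypothesis (b) is only the $s=0$ member of this family (it is equivalent to $F_{\alpha,p}(0)\le 0$ after an incomplete-Beta computation). The real technical content of the paper is the two lemmas proving $F_{\alpha,p}(s)\le 0$ for \emph{all} $s$: for large $p$ an analysis of the zeros of $Q_{\alpha,p,s}(t)=(1-s^4)^\alpha(s-t)^{-\frac{2+\alpha}{p}}-s^{-3}G_{\alpha,p}(s)(1-t)^{-\frac{2+\alpha}{p}}$ shows $F_{\alpha,p}$ is non-decreasing with $F_{\alpha,p}(1)=0$; for small $p$ a second-derivative-type analysis (through auxiliary functions $\widetilde G_{\alpha,p}$, $\widetilde E_{\alpha,p}$) shows $F_{\alpha,p}$ decreases then increases, so its maximum is attained at the endpoints, and only then does the single condition $F_{\alpha,p}(0)\le 0$ (your hypothesis (b)) finish the argument. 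None of this appears in your proposal.

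Relatedly, your logical reduction of case (a) to case (b) does not work as described. The mechanism by which the integral condition suffices in case (b) is itself proved using $2+2\alpha<p<2+\alpha+\sqrt{\alpha^2+\tfrac72\alpha+3}$ (this is exactly what places the critical point $p_0=\frac{8+4\alpha-2p-\frac{2+\alpha}{p}}{8+4\alpha-2p}$ inside $(0,1)$), so even if you verified the integral inequality for $p$ in the range (a) ``by monotonicity in $p$,'' you could not invoke the (b)-machinery there. In the paper, case (a) is handled by a separate unconditional lemma whose proof uses the threshold $2+\alpha+\sqrt{\alpha^2+\tfrac72\alpha+3}$ in a quite different way: it is precisely the condition under which $2p-4\alpha-8+\frac{2+\alpha}{p}\ge 0$, which rules out interior zeros of $Q_{\alpha,p,s}$ and yields $F_{\alpha,p}'\ge 0$ on $(0,1)$. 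So the quantity $\sqrt{\alpha^2+\tfrac72\alpha+3}$ enters through this sign condition, not through an estimate of the integral in (b). (Minor point: the representation $\mathcal{H}f(z)=\int_0^1\omega(t)T_tf(z)\,dt$ with $\omega(t)=1/t$ is not correct; after the change of variables the weight is identically $1$.) In summary, the proposal reproduces the known outer framework but leaves the actual new content of the theorem --- the family of Beta-function inequalities $F_{\alpha,p}(s)\le 0$ and the two distinct monotonicity arguments for the two $p$-ranges --- unproved, and the suggested shortcut from (a) to (b) would fail.
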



The proof of this result is based on two lemmas. In both lemmas, two functions, central to our study, appear. These auxiliary functions are defined via infinite convergent sums and arise from the evaluation of an involved integral via the use of series expansions. In the first lemma, an estimate for the Beta function is established and it confirms the conjecture for the large values of $p$, namely for $2+ \alpha +\sqrt{\alpha^2+\frac{7}{2}\alpha +3} \le p < 2(2+\alpha)$. In the second lemma, which is a generalization of Lemma 2.6 in \cite{4} for the weighted Bergman spaces, the values $2+2\alpha < p <2+\alpha+\sqrt{\alpha^2+\frac{7}{2}\alpha +3}$ are considered. 
Furthermore, a sufficient condition for the conjecture to hold is also introduced in the second lemma (see (b)-part of Theorem \ref{Main Theorem}). 
It turns out that this condition does not hold for every $\alpha >  0$ when $2+2\alpha < p < 2+ \alpha +\sqrt{\alpha^2+\frac{7}{2}\alpha +3}$. Moreover, it is shown that the conjecture is valid for $0 < \alpha \leq 1/19$, when $2+2\alpha < p < \frac{5}{2}+2\alpha$ and for $0 < \alpha \leq \frac{1}{15000}$ when $\frac{5}{2}+2\alpha \le p < 3 + 2\alpha$. Lastly, the case $\alpha = 1$ is studied as an example and it is observed that the condition does not hold for small values of $p$, but it does hold for large values of $p$.

The article is organized as follows. In section 2, we recall some preliminaries, including the integral representation of the Hilbert matrix operator in terms of certain weighted composition operators and classical identities concerning the Beta and Gamma functions. Section 3 contains auxiliary results such as estimates for the Beta function due to Bhayo, S\'andor and Iv\'ady and definitions of the aforementioned auxiliary functions. The two key lemmas are also presented in this section. In Section 4, the proof of the main result is provided and it is followed by Lemma 4.1 and Proposition 4.2, in which the focus is on the small values of $\alpha.$ Finally, we conclude with the case $\alpha = 1$ in Example 4.1.

\section{Preliminaries}
Let $H(\mathbb{D})$ be the algebra of all analytic functions on the unit disk $\mathbb{D}$. For an analytic function $f$ it holds that $f(z) = \sum_{k = 0}^\infty a_kz^k$, where $a_k \in \C$. The Hilbert matrix operator $\mathcal{H}$ can be expressed as an operator on spaces of analytic functions by its action on the Taylor coefficients $a_k$ in the following way
\begin{equation*}
	\mathcal{H}(f)(z) = \sum_{n = 0}^\infty \left(\sum_{k = 0}^\infty \frac{a_k}{n+k+1}\right)z^n.
\end{equation*}
The operator $\mathcal{H}$ can also be written as an integral average of certain weighted composition operators as follows
\begin{equation*}
	\mathcal{H}(f)(z) =\int_0^1 T_t(f)(z)dt,
\end{equation*}
where $T_t(f)(z) = w_t(z)f(\phi_t(z))$, $w_t(z) = \frac{1}{(t-1)z+1}$ and $\phi_t(z) = \frac{t}{(t-1)z+1}$ for $z \in \D$ and $0 < t < 1$.
The standard weighted Bergman spaces are defined as
\begin{equation*}
	A_{\alpha}^p(\mathbb{D}) := \left\{ f \in H(\mathbb{D}) : \lVert f \rVert_{A_{\alpha}^p} = \left(\int_{\mathbb{D}} |f(w)|^p dA_{\alpha}(w)\right)^{1/p} < \infty \right\},
\end{equation*}
where $dA_{\alpha}(w) = (\alpha+1)(1-|w|^2)^{\alpha}dA(w)$ and $dA(w)$ is the normalized Lebesgue area measure on $\mathbb{D}$. Throughout this article we will assume that $p> 2 +\alpha$ and $ \alpha \geq 0$. We have the following upper estimate: 
\begin{equation}
\label{Normstart1}
	\lVert \mathcal{H}(f)\rVert_{A_{\alpha}^p} \leq \int_0^1 \lVert T_t(f)\rVert_{A_{\alpha}^p} dt.
\end{equation}
Moreover,
\begin{equation}\label{Normstart2}
	\begin{split}
		\lVert T_t(f) \rVert_{A_{\alpha}^p} &= \left(\int_{\mathbb{D}} |T_t(f)(w)|^p dA_{\alpha}(w)\right)^{1/p}\\
								&\leq\frac{t^{\frac{2+\alpha}{p}-1}}{(1-t)^{\frac{2+\alpha}{p}}}\left((\alpha+1)\int_{D_t}|w|^{p-2\alpha-4}|f(w)|^p(1-|w|^2)^{\alpha} dA(w)\right)^{1/p},
	\end{split}
\end{equation}
where $D_t = D(\frac{1}{2-t},\frac{1-t}{2-t})$ is the disk with radius $\frac{1-t}{2-t}$ and center $\frac{1}{2-t}$, see \cite{6}. We will also need the Beta function, which is defined as the integral
\begin{equation*}
	B(s,t) = \int_0^1 t^{s-1}(1-t)^{t-1}dt,
\end{equation*}
where $s$ and $t$ are complex numbers satisfying $\Re(s) > 0$ and $\Re(t) > 0$. It can be checked that $B(s,t) = \frac{\Gamma(s)\Gamma(t)}{\Gamma(s+t)}$, where $\Gamma$ is the Gamma function:
\begin{equation*}
	\Gamma(z) = \int_0^\infty x^{z-1}e^{-x}dx, \quad \Re(z) > 0.
\end{equation*}
We will use the well-known reflection formula 
\begin{equation*}
	\Gamma(z)\Gamma(1-z) = \frac{\pi}{\sin \pi z}, \hspace{0.25cm} z \in \mathbb{C} \setminus \mathbb{Z}.
\end{equation*}
The incomplete Beta function, denoted by $B_t$, is defined as
\begin{equation*}
	B_t(x,y) = \int_0^t s^{x-1}(1-s)^{y-1}ds.
\end{equation*}
By dividing $B_t(x,y)$ with $B(x,y)$ we obtain the regularized incomplete Beta function
\begin{equation*}
	I_t(x,y) = \frac{B_t(x,y)}{B(x,y)}.
\end{equation*}
The binomial series $(1+z)^\alpha = \sum_{k = 0}^\infty \binom{\alpha}{k} z^k$, where $\alpha, z \in \C$ and $\binom{\alpha}{k}$ are the generalized binomial coefficients defined as
\begin{equation*}
	\binom{\alpha}{k} = \frac{\alpha(\alpha-1)\cdots(\alpha -k -1)}{k!}, \quad \binom{\alpha}{0}=1,
\end{equation*}
converges absolutely for all complex values of $\alpha$ when $|z| < 1$. In the context of the article, the parameter $\alpha$ is always a real number. We refer the interested reader to \cite{2} for these and other identities regarding the Beta function and Gamma function.

\section{Auxiliary results}
In this section several lemmas needed for the main result are presented. The following result can be found in \cite{5, 11} and hence its proof is omitted.
\begin{lem}\label{11}\label{lower bound1} Let $x \ge 1$, $0 < y \le 1$. Then
	\begin{enumerate}
		\item[(a)] $B(x,y) \leq \frac{1}{x}+\frac{1}{y} -1;$
		\item[(b)] $B(x,y) \geq \frac{1}{xy}\frac{x+y}{1+xy}$.
	\end{enumerate}
	The inequalities reverse when $x,y \in (0,1]$.
\end{lem}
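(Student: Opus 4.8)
Since $B(x,1)=1/x$ and $B(1,y)=1/y$, part (a) is equivalent to the statement that
\[
\varphi_y(x):=B(x,y)-B(x,1)=\int_0^1 t^{x-1}\bigl((1-t)^{y-1}-1\bigr)\,dt
\]
satisfies $\varphi_y(x)\le\varphi_y(1)=\tfrac1y-1$ for $x\ge1$ and $\varphi_y(x)\ge\varphi_y(1)$ for $0<x\le1$. For $0<y\le1$ the integrand is nonnegative on $(0,1)$, and for each fixed $t\in(0,1]$ the map $x\mapsto t^{x-1}$ is nonincreasing; hence $\varphi_y$ is nonincreasing on $(0,\infty)$, and comparison with the value at $x=1$ yields (a) and its reversal simultaneously (for the reversal one also uses $\tfrac1y-1\ge0$ when $y\le1$). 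This step should present no difficulty.

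For (b) I would first apply the identity $B(x,y)=\frac{x+y}{y}\,B(x,y+1)$, a consequence of $\Gamma(z+1)=z\Gamma(z)$, which turns the bound $B(x,y)\ge\frac{1}{xy}\cdot\frac{x+y}{1+xy}$ into $B(x,y+1)\ge\frac{1}{x(1+xy)}$, that is, into
\[
\int_0^1 t^{x-1}\bigl(1-(1-t)^y\bigr)\,dt\ \le\ \frac{y}{1+xy}\,,\qquad x\ge1,\ 0<y\le1 .
\]
Two preliminary reductions are available. First, this inequality is an \emph{equality} along the whole edge $x=1$ and along the whole edge $y=1$, so the estimate must be sharp near the corner $(1,1)$ and cannot afford to lose a constant there. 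Secondly, using the companion identity $B(x,y)=\frac{x-1}{x+y-1}\,B(x-1,y)$, one checks that (b) at $(x-1,y)$ with $x-1\ge1$ implies (b) at $(x,y)$, the residual inequality being $y(1-x)(1-y)\le0$, which holds trivially; so the whole of (b) reduces to the band $1\le x\le2$.

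On the band $1\le x\le2$ my plan would be to rewrite, using $1-(1-t)^y=y\int_0^t(1-s)^{y-1}\,ds$ and Fubini, the left side of the displayed inequality as $\frac{y}{x}\int_0^1(1-s)^{y-1}(1-s^{x})\,ds$, so that it becomes $\int_0^1(1-s)^{y-1}(1-s^{x})\,ds\le\frac{x}{1+xy}$, and then to use the binomial expansion $1-s^{x}=\sum_{k\ge1}(-1)^{k+1}\binom{x}{k}(1-s)^{k}$, whose leading coefficient is positive and all of whose later coefficients are nonpositive for $1\le x\le2$, so that every partial sum is a valid pointwise upper bound for $1-s^{x}$; integrating the $N$-term truncation against $(1-s)^{y-1}$ reduces the problem to the rational inequality $\sum_{k=1}^{N}\frac{(-1)^{k+1}\binom{x}{k}}{y+k}\le\frac{x}{1+xy}$. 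This should succeed for $y$ bounded away from $1$, with the number of terms needed increasing as $y\to1$; near that edge, where the inequality degenerates to an identity, a separate limiting or perturbative argument in $y$ is required. The main obstacle is exactly this edge-sharpness: the crude pointwise estimates $(1-t)^y\ge1-t$ or a single tangent line are provably too weak (each already fails at $x=1$), and even the reversed form of (a) fed into the recursion does not suffice on $1\le x\le2$, so one genuinely needs the curvature information carried by several binomial terms, after which the boundary-tight rational inequalities must be cleared and the vanishing factors $(x-1)$ and $(1-y)$ divided out carefully. An alternative and perhaps cleaner route is to deduce (b) from a sharp Kershaw--Gautschi-type two-sided bound for $\Gamma(x+y)/\Gamma(x)$, which trades the ad hoc estimates for a known refined inequality, at the cost of checking that inequality is strong enough near the corner; this is essentially what the cited references do. The reversal of (b) for $x,y\in(0,1]$ is obtained by running the same reductions with all inequalities interchanged.
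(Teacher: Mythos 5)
Your argument for part (a) is complete and correct: writing $B(x,y)-\frac{1}{x}=\int_0^1 t^{x-1}\bigl((1-t)^{y-1}-1\bigr)\,dt$ and noting that for $0<y\le1$ the integrand is nonnegative and pointwise nonincreasing in $x$ gives both the bound for $x\ge1$ and its reversal for $0<x\le1$ by comparison with the value at $x=1$. Bear in mind that the paper itself proves nothing here: Lemma \ref{lower bound1} is quoted from Bhayo--S\'andor \cite{5} and Iv\'ady \cite{11} with the proof omitted, so the only issue is whether your self-contained argument closes. For (b) it does not.

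For part (b) what you give is a plan whose acknowledged hole is exactly the hard part. Your reductions are all correct: the identities $B(x,y)=\frac{x+y}{y}B(x,y+1)$ and $B(x,y)=\frac{x-1}{x+y-1}B(x-1,y)$, the descent to the band $1\le x\le2$ with residual $y(1-x)(1-y)\le0$, and the reformulation $\int_0^1(1-s)^{y-1}(1-s^x)\,ds\le\frac{x}{1+xy}$. But the truncation step cannot be completed as stated: for non-integer $x\in(1,2)$ the discarded tail is strictly negative, so the truncated inequality $\sum_{k=1}^{N}\frac{(-1)^{k+1}\binom{x}{k}}{y+k}\le\frac{x}{1+xy}$ is strictly stronger than the target and is \emph{false} at $y=1$ (where the target degenerates to an equality) for every finite $N$; hence no finite truncation can cover a neighbourhood of that edge, and the ``separate limiting or perturbative argument'' you defer to is precisely the sharp content of Iv\'ady's inequality, which you never supply. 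Even away from the edge you exhibit no concrete $N$ and do not verify the resulting rational inequality, and the closing claim that the reversal for $x,y\in(0,1]$ follows ``by running the same reductions with all inequalities interchanged'' is unjustified: the descent $x\mapsto x-1$ is unavailable there and the sign pattern of $\binom{x}{k}$ changes on $0<x\le1$. As it stands, part (b) and its reversal must still be quoted from \cite{11} (or obtained via the Kershaw--Gautschi route you mention but likewise do not carry out).
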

Let $\alpha \geq 0$ and $2+2\alpha < p < 2(2+\alpha)$. Define the functions
\begin{equation*}
	\begin{split}
		&H_ {\alpha, p}(s) = \sum_{k=0}^\infty \binom{\alpha}{k} (-1)^k \frac{1}{p-2\alpha -2 +2k}
	 - \frac{1}{2(\alpha +1)}(1-s^4)^{\alpha +1}, \hspace{0.25cm} 0 \le s \le 1; \\
		&K_{\alpha, p}(s,t) =\sum_{k=0}^\infty \binom{\alpha}{k} (-1)^k \frac{1}{p-2\alpha -2 +2k}
		\max\{s^2,t^2\}^{p - 2\alpha-2+2k}, \hspace{0.25cm} 0 \le  s \le 1, 0< t < 1.
	\end{split}
\end{equation*} 
Since for every $k \ge 1$ it holds that
\begin{equation*}
	\left|\frac{(-1)^k}{p-2\alpha-2+2k}\right| < 1
\end{equation*}
and
\begin{equation*}
	\left| \frac{(-1)^k}{p-2\alpha -2 +2k}\max\{s^2,t^2\}^{p - 2\alpha-2+2k}\right| < 1,
\end{equation*}
the functions are well defined. These functions will appear in our two key lemmas needed for the main proof. The following expressions will turn out to be important later.
\begin{lem}\label{Condition1} Let $\alpha \geq 0$ and $2+ 2\alpha < p < 2(2 + \alpha)$ and define $\psi_{\alpha,p}(t) = t^{\frac{2+\alpha}{p}-1}(1-t)^{-\frac{2+\alpha}{p}}$. Then the following statements are equivalent:
	\begin{enumerate}
		\item[(a)]
			\begin{equation*}
				B\left(\frac{2+\alpha}{p}, 1 - \frac{2+\alpha}{p}\right)H_{\alpha, p}(0) -\int_0^1 \psi_{\alpha, p}(t)K_{\alpha, p}(0,t)dt \leq 0;
			\end{equation*}
		\item[(b)]
			\begin{equation}\label{Condition(otherform)}
				\begin{split}
					&B\left(\frac{2+\alpha}{p}, 1 - \frac{2+\alpha}{p}\right)\left[\frac{1}{2}B\left(\frac{p-2\alpha-2}{2}, \alpha +1\right) - \frac{1}{2(\alpha+1)}\right]\\
					&-\frac{1}{2}\int_0^1 \psi_{\alpha, p}(t)B_{t^4}\left(\frac{p-2\alpha-2}{2}, \alpha +1\right)dt \leq 0; 
				\end{split}	
			\end{equation}
		\item[(c)] 
			\begin{equation}\label{Condition}
				\int_0^1 I_t\left(\frac{2+\alpha}{p}, 1 - \frac{2+\alpha}{p}\right)t^{2p-4\alpha-5}(1-t^4)^\alpha dt - \frac{1}{4(\alpha +1)} \leq 0.
			\end{equation}
	\end{enumerate}
\end{lem}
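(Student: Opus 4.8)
The statement is purely a chain of equivalences between three inequalities, so the whole game is to show that the left-hand sides of (a), (b) and (c) coincide (or are positive scalar multiples of one another). The plan is to compute the two ingredients $H_{\alpha,p}(0)$ and $\int_0^1\psi_{\alpha,p}(t)K_{\alpha,p}(0,t)\,dt$ in closed form using the series definitions, recognize the resulting sums as (incomplete) Beta functions, and then perform one elementary change of variables to pass from (b) to (c).

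\smallskip
\textbf{Step 1: Identify $H_{\alpha,p}(0)$ and $K_{\alpha,p}(0,t)$ with Beta integrals.}
Setting $s=0$ kills the term $-\frac{1}{2(\alpha+1)}(1-s^4)^{\alpha+1}$ in $H$ only partially — at $s=0$ it equals $-\frac{1}{2(\alpha+1)}$ — while $\max\{0,t^2\}=t^2$, so $K_{\alpha,p}(0,t)=\sum_{k\ge 0}\binom{\alpha}{k}(-1)^k\frac{t^{2(p-2\alpha-2+2k)}}{p-2\alpha-2+2k}$. The key observation is that
\begin{equation*}
\sum_{k=0}^\infty \binom{\alpha}{k}(-1)^k\frac{r^{p-2\alpha-2+2k}}{p-2\alpha-2+2k}
=\int_0^{r}\sum_{k=0}^\infty\binom{\alpha}{k}(-1)^k u^{p-2\alpha-3+2k}\,du
=\int_0^{r} u^{p-2\alpha-3}(1-u^2)^\alpha\,du,
\end{equation*}
valid for $0\le r\le 1$ by the binomial series $(1-u^2)^\alpha=\sum_k\binom{\alpha}{k}(-1)^k u^{2k}$ (absolutely convergent for $|u|<1$, and integrable up to $1$ since the exponent $p-2\alpha-3 > -1$ under the hypothesis $p>2+2\alpha$). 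Substituting $v=u^2$ turns this into $\tfrac12\int_0^{r^2}v^{(p-2\alpha-4)/2}(1-v)^\alpha\,dv=\tfrac12 B_{r^2}\!\left(\tfrac{p-2\alpha-2}{2},\alpha+1\right)$. Taking $r=1$ gives $\sum_k\binom{\alpha}{k}(-1)^k\frac{1}{p-2\alpha-2+2k}=\tfrac12 B\!\left(\tfrac{p-2\alpha-2}{2},\alpha+1\right)$, hence $H_{\alpha,p}(0)=\tfrac12 B\!\left(\tfrac{p-2\alpha-2}{2},\alpha+1\right)-\tfrac{1}{2(\alpha+1)}$; and taking $r=t^2$ gives $K_{\alpha,p}(0,t)=\tfrac12 B_{t^4}\!\left(\tfrac{p-2\alpha-2}{2},\alpha+1\right)$. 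Plugging these into (a) yields exactly (b); this is the equivalence (a)$\iff$(b), and in fact an equality of the two left-hand sides.

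\smallskip
\textbf{Step 2: From (b) to (c).}
In (b) divide through by the positive factor $B\!\left(\tfrac{2+\alpha}{p},1-\tfrac{2+\alpha}{p}\right)$ — positive because $0<\tfrac{2+\alpha}{p}<1$ for $p>2+\alpha$ — and write $B_{t^4}(\cdot,\cdot)=I_{t^4}(\cdot,\cdot)\,B(\cdot,\cdot)$ in the integral term, while also dividing the bracket by the same Beta function. Then recall $\psi_{\alpha,p}(t)=t^{\frac{2+\alpha}{p}-1}(1-t)^{-\frac{2+\alpha}{p}}$, so that $\int_0^1\psi_{\alpha,p}(t)\,dt=B\!\left(\tfrac{2+\alpha}{p},1-\tfrac{2+\alpha}{p}\right)$; this lets one re-expand the constant $\tfrac{1}{2(\alpha+1)}$ as an integral against $\psi_{\alpha,p}$. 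After these manipulations the inequality reads
\begin{equation*}
\int_0^1 \psi_{\alpha,p}(t)\Big[\tfrac12 B\!\left(\tfrac{p-2\alpha-2}{2},\alpha+1\right)I_{t^4}\!\left(\tfrac{p-2\alpha-2}{2},\alpha+1\right)\Big]dt \;\text{vs.}\; \text{const},
\end{equation*}
and the remaining work is a substitution — most naturally $t\mapsto t^{1/4}$ or an integration-by-parts in the $t$-variable combined with the substitution that converts the incomplete Beta in the fourth power of $t$ into the regularized incomplete Beta $I_t\!\left(\tfrac{2+\alpha}{p},1-\tfrac{2+\alpha}{p}\right)$ appearing in (c), producing the weight $t^{2p-4\alpha-5}(1-t^4)^\alpha$ and the constant $\tfrac{1}{4(\alpha+1)}$. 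One should double-check that all exponents keep the integrals convergent ($p>2+2\alpha$ is exactly what makes $t^{2p-4\alpha-5}$ integrable near $0$), and that no sign is lost when dividing by positive quantities, so that each "$\le 0$" is genuinely preserved.

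\smallskip
\textbf{Main obstacle.}
The conceptual content is Step 1 (the termwise integration of the binomial series and recognizing the telescoped sums as Beta functions) together with bookkeeping; the genuinely fiddly part is Step 2 — getting the change of variables and possible integration by parts to land on precisely the exponent $2p-4\alpha-5$ and the constant $\tfrac{1}{4(\alpha+1)}$ in (c), since there are several equivalent-looking substitutions and it is easy to be off by a factor of $2$ or $4$ or to misplace a power of $t$. I would fix one direction (say prove (b)$\implies$(c) carefully) and then note that every step is reversible, rather than proving all three implications separately.
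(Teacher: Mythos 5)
Your Step 1 is correct and is exactly the paper's argument: termwise integration of the binomial series gives $H_{\alpha,p}(0)=\tfrac12 B\bigl(\tfrac{p-2\alpha-2}{2},\alpha+1\bigr)-\tfrac{1}{2(\alpha+1)}$ and $K_{\alpha,p}(0,t)=\tfrac12 B_{t^4}\bigl(\tfrac{p-2\alpha-2}{2},\alpha+1\bigr)$, so (a) and (b) have identical left-hand sides.

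Step 2, however, is a genuine gap: you never actually carry out the passage from (b) to (c), and the route you flag as most natural (a substitution such as $t\mapsto t^{1/4}$, possibly after dividing by the complete Beta function) cannot work on its own. The point is that (b) involves $B_{t^4}\bigl(\tfrac{p-2\alpha-2}{2},\alpha+1\bigr)$ weighted by $\psi_{\alpha,p}(t)$, while (c) involves the \emph{other} Beta function, $I_t\bigl(\tfrac{2+\alpha}{p},1-\tfrac{2+\alpha}{p}\bigr)$, weighted by $t^{2p-4\alpha-5}(1-t^4)^\alpha$; no change of variables interchanges the parameters of an incomplete Beta function with its weight. The mechanism is precisely the integration by parts you mention only in passing: since $\int_0^t\psi_{\alpha,p}(r)\,dr=B_t\bigl(\tfrac{2+\alpha}{p},1-\tfrac{2+\alpha}{p}\bigr)$ and $\tfrac{d}{dt}B_{t^4}\bigl(\tfrac{p-2\alpha-2}{2},\alpha+1\bigr)=4\,t^{2p-4\alpha-5}(1-t^4)^\alpha$, one gets
\begin{equation*}
\int_0^1\psi_{\alpha,p}(t)\,B_{t^4}\!\left(\tfrac{p-2\alpha-2}{2},\alpha+1\right)dt
= B\!\left(\tfrac{2+\alpha}{p},1-\tfrac{2+\alpha}{p}\right)B\!\left(\tfrac{p-2\alpha-2}{2},\alpha+1\right)
-4\int_0^1 B_t\!\left(\tfrac{2+\alpha}{p},1-\tfrac{2+\alpha}{p}\right)t^{2p-4\alpha-5}(1-t^4)^\alpha\,dt,
\end{equation*}
and the boundary term cancels the product of complete Beta functions in (b), leaving exactly $2B\bigl(\tfrac{2+\alpha}{p},1-\tfrac{2+\alpha}{p}\bigr)\bigl[\int_0^1 I_t\bigl(\tfrac{2+\alpha}{p},1-\tfrac{2+\alpha}{p}\bigr)t^{2p-4\alpha-5}(1-t^4)^\alpha dt-\tfrac{1}{4(\alpha+1)}\bigr]\le 0$, which is (c) since the prefactor is positive. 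This cancellation is the structural fact your sketch misses; also, your idea of re-expanding the constant $\tfrac{1}{2(\alpha+1)}$ as an integral against $\psi_{\alpha,p}$ is an unnecessary detour. Without the computation above, the equivalence (b)$\iff$(c) is asserted rather than proved.
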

\begin{proof}[Proof.]
	Assume that (a) holds. Then
	\begin{equation}\label{LL}
		\begin{split}
			&B\left(\frac{2+\alpha}{p}, 1 - \frac{2+\alpha}{p}\right)\left[\sum_{k = 0}^\infty \binom{\alpha}{k}(-1)^k \frac{1}{p-2\alpha-2+2k} - \frac{1}{2(\alpha+1)}\right]\\
			&-\int_0^1  \psi_{\alpha, p}(t)\left[\sum_{k = 0}^\infty \binom{\alpha}{k}(-1)^k\frac{t^{2(p-2\alpha-2+2k)}}{p -2\alpha-2+2k}\right]dt \leq 0.
		\end{split}
	\end{equation}
	Now we evaluate the sums $\sum_{k = 0}^\infty \binom{\alpha}{k}(-1)^k \frac{1}{p-2\alpha-2+2k}$ and $\sum_{k = 0}^\infty \binom{\alpha}{k}(-1)^k\frac{t^{2(p-2\alpha-2+2k)}}{p -2\alpha-2+2k}$. For the first sum it holds that
	\begin{equation}
	\label{firstsum}
		\sum_{k = 0}^\infty \binom{\alpha}{k}(-1)^k t^{p-2\alpha-3+2k} = t^{p-2\alpha-3}(1-t^2)^\alpha.
	\end{equation}
	Integrating both sides gives us
	\begin{equation*}
		\begin{split}
			&\sum_{k = 0}^\infty \binom{\alpha}{k}(-1)^k\frac{1}{p- 2\alpha-2+2k} =  \int_0^1 t^{p-2\alpha-3}(1-t^2)^\alpha dt,
		\end{split}
	\end{equation*}
	where the order of integration and summation is justified by Lebesgue's dominated convergence theorem.
	For the right-hand side we have
	\begin{equation*}
		\begin{split}
			\int_0^1 t^{p-2\alpha-3}(1-t^2)^\alpha dt &= \int_0^1 s^{\frac{p-2\alpha -3}{2}}(1-s)^\alpha \frac{ds}{2\sqrt{s}}\\
										&=\frac{1}{2}\int_0^1 s^{\frac{p-2\alpha-4}{2}}(1-s)^\alpha ds\\
										&= \frac{1}{2} B\left(\frac{p-2\alpha -2}{2}, \alpha +1\right).
		\end{split}
	\end{equation*}
	So we obtain
	\begin{equation*}
		\sum_{k = 0}^\infty \binom{\alpha}{k}(-1)^k\frac{1}{p-2\alpha-2+2k} = \frac{1}{2}B\left(\frac{p-2\alpha-2}{2}, \alpha +1\right).
	\end{equation*}
	For the second sum we integrate both sides of \eqref{firstsum} from $0$ to $t^2$ and the dominated convergence theorem implies that
	\begin{equation*}
		\begin{split}
		 	&\sum_{k = 0}^\infty \binom{\alpha}{k}(-1)^k \left[\frac{t^{p-2\alpha -2 +2k}}{p-2\alpha-2+2k}\right]_0^{t^2} = \int_0^{t^2} t^{p-2\alpha -3}(1-t^2)^\alpha dt,\\
			&\sum_{k = 0}^\infty \binom{\alpha}{k}(-1)^k\frac{t^{2(p-2\alpha -2 +2k)}}{p -2\alpha -2 +2k} = \int_0^{t^4}s^{\frac{p-2\alpha-3}{2}}(1-s)^\alpha \frac{ds}{2\sqrt{s}} = \frac{1}{2}B_{t^4}\left(\frac{p-2\alpha-2}{2}, \alpha +1\right).
		\end{split}
	\end{equation*}
	Estimate $(\ref{LL})$ now takes the form
	\begin{equation*}
		\begin{split}
			&B\left(\frac{2+\alpha}{p}, 1 - \frac{2+\alpha}{p}\right)\left[\frac{1}{2}B\left(\frac{p-2\alpha-2}{2}, \alpha +1\right) - \frac{1}{2(\alpha+1)}\right]\\
			&-\frac{1}{2}\int_0^1 \psi_{\alpha, p}(t)B_{t^4}\left(\frac{p-2\alpha-2}{2}, \alpha +1\right)dt \leq 0,
		\end{split}
	\end{equation*}
which is estimate \eqref{Condition(otherform)} and therefore (a) and (b) are equivalent. We now show that (b) and (c) are equivalent. Rewriting the term $\int_0^1 \psi_{\alpha, p}(t) \int_0^{t^4} r^{\frac{p-2\alpha-4}{2}}(1-r)^\alpha drdt$ in \eqref{Condition(otherform)} by using integration by parts we get
	\begin{equation*}
		\begin{split}
			&\left[ \int_0^t \psi_{\alpha, p}(r)dr B_{t^4}\left(\frac{p-2\alpha-2}{2}, \alpha +1\right)\right]_0^1 -\int_0^1 \left(\int_0^t(\psi_{\alpha, p}(r)dr\right)4t^3(t^{2(p-2\alpha-4)}(1-t^4)^\alpha)dt\\
			&=B\left(\frac{2+\alpha}{p}, 1 - \frac{2+\alpha}{p}\right)B\left(\frac{p-2\alpha-2}{2}, \alpha+1\right)\\
			&-4\int_0^1 B_t\left(\frac{2+\alpha}{p}, 1 - \frac{2+\alpha}{p}\right)t^{2p-4\alpha-5}(1-t^4)^\alpha dt.
		\end{split}
	\end{equation*}
	Thus, estimate \eqref{Condition(otherform)} 
	takes the form
	\begin{equation*}
		2B\left(\frac{2+\alpha}{p}, 1 -\frac{2+\alpha}{p}\right)\left[\int_0^1 I_t\left(\frac{2+\alpha}{p}, 1 - \frac{2+\alpha}{p}\right)t^{2p-4\alpha-5}(1-t^4)^\alpha dt -\frac{1}{4(\alpha+1)}\right] \le 0.
	\end{equation*}
	Hence (b) is equivalent to \eqref{Condition}.
	\end{proof}
The next two lemmas are the tools needed to prove the main result of the article. They cover the two cases: $2+2\alpha < p < 2+\alpha + \sqrt{\alpha^2+\frac{7}{2}\alpha +3}$ and $2+\alpha +\sqrt{\alpha^2+\frac{7}{2}\alpha +3} \le p <2(2+\alpha)$. We begin with the latter case, because in this case we have obtained a complete result.
\begin{lem}\label{ML1} Let $2 + \alpha + \sqrt{\alpha^2+\frac{7}{2}\alpha+3} \le p < 2(2 + \alpha)$, $\alpha \geq 0$ and $s \in [0,1]$. Then
	\begin{equation*}
		B\left(\frac{2+\alpha}{p}, 1 - \frac{2 + \alpha}{p}\right)H_{\alpha, p}(s) \leq \int_0^1 \psi_{\alpha, p}(t)K_{\alpha,p}(s,t)dt.
	\end{equation*}
\end{lem}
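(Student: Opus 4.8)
The plan is to turn the statement into a one--variable calculus inequality by first evaluating the two series in closed form. Using \eqref{firstsum} and the computations already made in the proof of Lemma~\ref{Condition1}, with the abbreviations $a=\frac{p-2\alpha-2}{2}$, $b=\alpha+1$ and $x=\frac{2+\alpha}{p}$, one has
\[
H_{\alpha,p}(s)=\tfrac12 B(a,b)-\tfrac1{2b}(1-s^4)^{b},\qquad K_{\alpha,p}(s,t)=\tfrac12 B_{\max\{s^4,t^4\}}(a,b),
\]
and, since $\int_0^s\psi_{\alpha,p}(t)\,dt=B_s(x,1-x)$, the asserted inequality is equivalent to $F(s)\ge 0$ on $[0,1]$, where (splitting the integral on the right at $t=s$, where $\max\{s^4,t^4\}$ changes from $s^4$ to $t^4$)
\[
F(s)=\tfrac12 B_{s^4}(a,b)\,B_s(x,1-x)+\tfrac12\int_s^1\psi_{\alpha,p}(t)B_{t^4}(a,b)\,dt-B(x,1-x)\left[\tfrac12 B(a,b)-\tfrac1{2b}(1-s^4)^{b}\right].
\]

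The second step is to observe $F(1)=0$ (all three pieces collapse using $B_1=B$ and $(1-1)^4=0$) and to differentiate on $(0,1)$. The key point is that the two contributions containing $\psi_{\alpha,p}(s)$ cancel, leaving the clean formula
\[
F'(s)=2s^3(1-s^4)^{b-1}\left[\,s^{4a-4}B_s(x,1-x)-B(x,1-x)\,\right].
\]
Since the prefactor $2s^3(1-s^4)^{b-1}$ is nonnegative on $[0,1]$ (here $b-1=\alpha\ge 0$), it remains to show that the bracket is $\le 0$, i.e. that $h(s):=s^{4a-4}B_s(x,1-x)\le B(x,1-x)=h(1)$ for all $s\in[0,1]$.

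For this, substitute $t=su$ in $B_s(x,1-x)$ to get $h(s)=s^{\,4a-4+x}\int_0^1 u^{x-1}(1-su)^{-x}\,du$. Both factors are nonnegative on $[0,1]$; the integral is nondecreasing in $s$ (for each fixed $u$ the integrand increases with $s$) with value $B(x,1-x)$ at $s=1$, while $s\mapsto s^{\,4a-4+x}$ is bounded by its value $1$ at $s=1$ exactly when $4a-4+x\ge 0$. A short computation shows $4(a-1)+x\ge 0$ is equivalent to $2p^2-2(2\alpha+4)p+(2+\alpha)\ge 0$, whose larger root is precisely $2+\alpha+\sqrt{\alpha^2+\frac72\alpha+3}$; thus the hypothesis $p\ge 2+\alpha+\sqrt{\alpha^2+\frac72\alpha+3}$ is exactly what makes $s^{\,4a-4+x}\le 1$. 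Hence $h(s)\le 1\cdot B(x,1-x)$, so $F'(s)\le 0$ on $(0,1)$; since $F$ is continuous on $[0,1]$ it is nonincreasing there, whence $F(s)\ge F(1)=0$, which is the claim.

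The argument is essentially routine once the closed forms are in place; the only genuine obstacle — and the reason the threshold $2+\alpha+\sqrt{\alpha^2+\frac72\alpha+3}$ appears — is recognizing that the exponent condition $4(a-1)+x\ge 0$ coincides with the hypothesis, and arranging the differentiation so the $\psi_{\alpha,p}(s)$ boundary terms cancel. (Equivalently, one may bound $B_s(x,1-x)\le \frac{s^x(1-s)^{-x}}{x}$ by monotonicity of the integrand and combine with $x\ge 4-4a$; this is the same computation written as $h'(s)\ge 0$.)
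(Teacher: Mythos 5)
Your proof is correct, and its overall skeleton is the same as the paper's: both arguments work with the difference of the two sides (your $F$ is the negative of the paper's $F_{\alpha,p}$), observe that it vanishes at $s=1$, and settle the sign of its derivative, which in both cases comes down to the single inequality $s^{2p-4\alpha-8}\int_0^s\psi_{\alpha,p}(t)\,dt\le B\bigl(\tfrac{2+\alpha}{p},1-\tfrac{2+\alpha}{p}\bigr)$ (your bracket, since $4a-4=2p-4\alpha-8$). Where you genuinely differ is in how this key inequality is proved: the paper rewrites $B(x,1-x)=\int_0^s t^{x-1}(s-t)^{-x}\,dt$, merges everything into one integrand $Q_{\alpha,p,s}(t)$ on $[0,s]$, and locates its possible zero to conclude $Q_{\alpha,p,s}\ge 0$, the threshold coming from the quadratic $2p^2-(4\alpha+8)p+(2+\alpha)\ge 0$; you instead scale $B_s(x,1-x)=s^{x}\int_0^1 u^{x-1}(1-su)^{-x}\,du$ and bound the two factors separately, the exponent condition $2p-4\alpha-8+\tfrac{2+\alpha}{p}\ge 0$ being exactly the same quadratic and hence exactly the hypothesis. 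Your route avoids the paper's zero-location and limit argument and is arguably more transparent; your use of the closed Beta-function forms of $H_{\alpha,p}$ and $K_{\alpha,p}$ (already computed in the proof of Lemma \ref{Condition1}) in place of the series is only cosmetic, and your parenthetical variant via $h'(s)\ge 0$ also checks out. If you write this up, just make explicit that $x=\tfrac{2+\alpha}{p}\in(0,1)$ and $a=\tfrac{p-2\alpha-2}{2}\in(0,1)$ under the stated hypotheses, so all the (incomplete) Beta integrals converge and $F$ is continuous on $[0,1]$, and that the term-by-term integrations are justified by dominated convergence as in the paper.
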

\begin{proof}[Proof.] Let
	\begin{equation*}
		F_{\alpha, p}(s) = B\left(\frac{2+\alpha}{p}, 1 - \frac{2 + \alpha}{p}\right)H_{\alpha, p}(s) - \int_0^1 \psi_{\alpha, p}(t)K_{\alpha,p}(s,t)dt.
	\end{equation*}
 We have
	\begin{equation*}
		F_{\alpha, p}(s) = B\left(\frac{2+\alpha}{p}, 1 - \frac{2 + \alpha}{p}\right)H_{\alpha, p}(s)  - \int_0^s \psi_{\alpha, p}(t)K_{\alpha, p}(s,t)dt + \int_1^s \psi_{\alpha,p}(t)K_{\alpha,p}(s,t)dt
	\end{equation*}
and we will show that $F_{\alpha, p}(s) \leq 0$ for all $s \in [0,1]$.  By a computation  we get
	\begin{equation*}
		F_{\alpha,p}'(s) = 2s^3(1-s^4)^{\alpha}B\left(\frac{2+\alpha}{p}, 1 - \frac{2+\alpha}{p}\right) - 2 G_{\alpha, p}(s)\int_0^s\psi_{\alpha,p}(t)dt,
	\end{equation*}
	where $G_{\alpha, p}(s) = \sum_{k=0}^\infty \binom{\alpha}{k} (-1)^k  s^{2(p-2\alpha -2 + 2k)-1}.$
	Notice that $F'_{\alpha, p }(0) = 0$. By a change of variables, we write
	\begin{equation*}
		B\left(\frac{2+\alpha}{p}, 1-\frac{2+\alpha}{p}\right) = \int_0^s t^{\frac{2+\alpha}{p} -1}(s-t)^{-\frac{2+\alpha}{p}}dt, \hspace{0.10cm} s \neq 0.
	\end{equation*}
Hence, we obtain
	\begin{equation*}
		F'_{\alpha, p}(s) = 2s^3\left[\int_0^s t^{\frac{2+\alpha}{p}-1}\left((1-s^4)^\alpha(s-t)^{-\frac{2+\alpha}{p}} - s^{-3}G_{\alpha, p}(s)(1-t)^{-\frac{2+\alpha}{p}}\right)dt\right].
	\end{equation*}
	Denote $Q_{\alpha,p,s}(t) = (1-s^4)^{\alpha}(s-t)^{-\frac{2+\alpha}{p}} - s^{-3}G_{\alpha, p}(s)(1-t)^{-\frac{2+\alpha}{p}}, t  \leq s$. By solving for zeros of $Q_{\alpha,p,s}$ and observing that  $s^{-3}G_{\alpha, p}(s)= s^{2p-4\alpha-8}(1-s^4)^\alpha$, we have for $s \neq 1$ that
	\begin{equation*}
		\begin{split}
			\frac{1-t}{s-t} &= \left(s^{2p-4\alpha-8}\right)^{\frac{p}{2+\alpha}}.
		\end{split}
	\end{equation*}
By solving for $t$ we get $t = \frac{1-sR_{\alpha, p}(s)}{1-R_{\alpha, p}(s)}$, where 
	$R_{\alpha, p}(s) = \left(s^{2p-4\alpha-8}\right)^{\frac{p}{2+\alpha}}.$
Now zeros exist only if there exists some $t_0$ such that
	\begin{equation*}
		t_0 = \frac{1-sR_{\alpha, p}(s)}{1-R_{\alpha, p}(s)} \geq 0.
	\end{equation*}
	The denominator $1-R_{\alpha, p}(s)$ is strictly less than zero, since $2p - 4\alpha -8 < 0$ and $s \in (0,1)$ so that 
	$s^{2p-4\alpha -8} > 1$. 
For the numerator we have that
			$1-sR_{\alpha, p}(s) \leq 0$ if and only if $s^{2p-4\alpha-8+\frac{2+\alpha}{p}} \geq 1$
	and this inequality holds if and only if $2p -4\alpha -8 + \frac{2+\alpha}{p} \leq 0$. But the last  inequality is not valid (except when $t_0=0$) since
			$2p -4\alpha -8 + \frac{2+\alpha}{p} \leq 0$ if and only if $2p^2 -(4\alpha +8)p +2 +\alpha \leq 0$ which in turn is equivalent to $$2 + \alpha - \sqrt{\alpha^2+\frac{7}{2}\alpha +3} \leq p \leq 2 + \alpha + \sqrt{\alpha^2+\frac{7}{2}\alpha +3}.$$
It follows that $Q_{\alpha,p,s}$ has no zeros or one zero at $t_0 =0$ and since $\lim_{t \to s-} Q_{\alpha,p,s}(t) = +\infty$, we have that $F'_{\alpha, p}(s) \geq 0$ on $(0,1)$. Therefore $F_{\alpha, p}(s)$ is non-decreasing on $(0,1)$. Since $F_{\alpha, p}(1) = 0$,
the statement follows.
\end{proof}
With modifications of Lemma 2.6 in \cite{4} we obtain one generalization of the aforementioned result that works on the weighted Bergman spaces.
\begin{lem}\label{Small p case} Let $2 + 2\alpha < p < 2 + \alpha + \sqrt{\alpha^2+\frac{7}{2}\alpha +3}$, $\alpha \geq 0$ and $s \in [0,1]$. Assume that
	\begin{equation*}
		\int_0^1 I_t\left(\frac{2+\alpha}{p}, 1 - \frac{2+\alpha}{p}\right)t^{2p-4\alpha-5}(1-t^4)^\alpha dt - \frac{1}{4(\alpha +1)} \leq 0.
	\end{equation*}
	Then
	\begin{equation*}
		B\left(\frac{2+\alpha}{p}, 1 - \frac{2 + \alpha}{p}\right)H_{\alpha, p}(s) \leq \int_0^1 \psi_{\alpha, p}(t)K_{\alpha,p}(s,t)dt.
	\end{equation*}
\end{lem}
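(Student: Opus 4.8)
The plan is to follow the proof of Lemma~\ref{ML1}, replacing the observation there that $F'\ge 0$ on all of $(0,1)$ by a sharper determination of the sign of $F'$. Put $a=\frac{2+\alpha}{p}$ and set
\begin{equation*}
	F_{\alpha,p}(s)=B\left(\frac{2+\alpha}{p},1-\frac{2+\alpha}{p}\right)H_{\alpha,p}(s)-\int_0^1\psi_{\alpha,p}(t)K_{\alpha,p}(s,t)\,dt ,
\end{equation*}
so that the assertion is exactly that $F_{\alpha,p}(s)\le 0$ for every $s\in[0,1]$. Splitting the integral at $t=s$ and differentiating (the computation is verbatim that in Lemma~\ref{ML1}, using $\int_0^s\psi_{\alpha,p}(t)\,dt=B_s(a,1-a)$, the identity $B(a,1-a)=\int_0^s t^{a-1}(s-t)^{-a}\,dt$, and $s^{-3}G_{\alpha,p}(s)=s^{2p-4\alpha-8}(1-s^4)^\alpha$) one finds
\begin{equation*}
	F'_{\alpha,p}(s)=2s^3(1-s^4)^\alpha\left[B(a,1-a)-s^{2p-4\alpha-8}B_s(a,1-a)\right]=:2s^3(1-s^4)^\alpha\,h(s) .
\end{equation*}
Since $s^3(1-s^4)^\alpha>0$ on $(0,1)$, the sign of $F'_{\alpha,p}$ on $(0,1)$ coincides with that of $h$.

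The key point is that $h$ changes sign exactly once on $(0,1)$, from negative to positive. Writing $c=8+4\alpha-2p$, note that for $2+2\alpha<p<2+\alpha+\sqrt{\alpha^2+\frac{7}{2}\alpha+3}$ one has $a\in(0,1)$, and since $p<2+\alpha+\sqrt{\alpha^2+\frac{7}{2}\alpha+3}<2(2+\alpha)$ also $c>0$ and $2p-4\alpha-8<0$; moreover $a<c$, since this is equivalent to $2p^2-(4\alpha+8)p+2+\alpha<0$, a quadratic with roots $2+\alpha\pm\sqrt{\alpha^2+\frac{7}{2}\alpha+3}$, and the same inequality is also equivalent to $2p-4\alpha-8+a<0$. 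Differentiating $h$ gives $h'(s)=-s^{2p-4\alpha-9}w(s)$ with $w(s)=s^a(1-s)^{-a}-c\,B_s(a,1-a)$, and $w'(s)=s^{a-1}(1-s)^{-a}\left(\frac{a}{1-s}-c\right)$ is negative on $\left(0,1-\frac{a}{c}\right)$ and positive on $\left(1-\frac{a}{c},1\right)$ (here $1-\frac{a}{c}\in(0,1)$ because $0<a<c$). Since $w(0^{+})=0$, we get $w<0$ on $\left(0,1-\frac{a}{c}\right]$, and as $w(s)\to+\infty$ when $s\to 1^{-}$ while $w$ increases on $\left(1-\frac{a}{c},1\right)$, the function $w$ has exactly one zero $s_1\in\left(1-\frac{a}{c},1\right)$, being negative before it and positive after. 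Hence $h$ increases on $(0,s_1)$ and decreases on $(s_1,1)$. From $B_s(a,1-a)\sim s^{a}/a$ as $s\to 0^{+}$ together with $2p-4\alpha-8+a<0$, we get $h(0^{+})=-\infty$, while $h(1)=B(a,1-a)-B(a,1-a)=0$; as $h$ is strictly decreasing on $(s_1,1)$, this forces $h(s_1)>0$. Consequently $h$ has a single zero $r_0\in(0,s_1)$, with $h<0$ on $(0,r_0)$ and $h>0$ on $(r_0,1)$.

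It follows that $F'_{\alpha,p}<0$ on $(0,r_0)$ and $F'_{\alpha,p}>0$ on $(r_0,1)$, so $F_{\alpha,p}$ is nonincreasing and then nondecreasing on $[0,1]$, whence $\max_{[0,1]}F_{\alpha,p}=\max\{F_{\alpha,p}(0),F_{\alpha,p}(1)\}$. A direct evaluation (using $\max\{1,t^2\}=1$ for $t\in(0,1)$, so that $K_{\alpha,p}(1,t)=H_{\alpha,p}(1)$ and $\int_0^1\psi_{\alpha,p}(t)\,dt=B(a,1-a)$) gives $F_{\alpha,p}(1)=0$, and by Lemma~\ref{Condition1} (equivalence of conditions (a) and (c) there) the standing hypothesis is equivalent to $F_{\alpha,p}(0)\le 0$. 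Therefore $F_{\alpha,p}(s)\le 0$ for all $s\in[0,1]$, which is the claim. The one genuinely delicate ingredient here is the single-sign-change property of $h$, which rests on the quadratic inequality $a<c$ holding precisely on the stated range of $p$; everything else closely parallels the proofs of Lemma~\ref{ML1} and Lemma~\ref{Condition1}.
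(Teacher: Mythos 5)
Your proof is correct, and it follows essentially the same strategy as the paper: the same function $F_{\alpha,p}$, the same derivative computation, the conclusion that $F_{\alpha,p}$ decreases then increases so that its maximum is $\max\{F_{\alpha,p}(0),F_{\alpha,p}(1)\}$, with $F_{\alpha,p}(1)=0$ and $F_{\alpha,p}(0)\le 0$ supplied by the hypothesis via Lemma \ref{Condition1}. The only divergence is in how the single sign change of $F'_{\alpha,p}$ is established: the paper works with $\widetilde{G}_{\alpha,p}(s)=s^{8+4\alpha-2p}B-\int_0^s\psi_{\alpha,p}$ and a second auxiliary function $\widetilde{E}_{\alpha,p}$, proving $\widetilde{E}_{\alpha,p}(p_0)>0$ by contradiction and then tracking the sign pattern through $p_1,p_2,p_3$; you instead analyze $h(s)=s^{-(8+4\alpha-2p)}\widetilde{G}_{\alpha,p}(s)=B-s^{2p-4\alpha-8}B_s$ directly, and the blow-up $h(0^+)=-\infty$ together with the one-sign-change of $w$ makes the unique zero of $h$ immediate, eliminating the contradiction step — a modest but genuine streamlining. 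Both arguments hinge on exactly the same quadratic fact, namely $8+4\alpha-2p-\frac{2+\alpha}{p}>0$ (your $a<c$, the paper's $p_0\in(0,1)$), which is precisely where the restriction $p<2+\alpha+\sqrt{\alpha^2+\tfrac{7}{2}\alpha+3}$ enters.
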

\begin{proof}[Proof.]
	Let $F_{\alpha, p}(s)$ be the same as in Lemma \ref{ML1}.  We will show that $F_{\alpha, p}(s) \leq 0$ for all $s \in [0,1]$.
	Denote again $G_{\alpha, p}(s) = \sum_{k = 0}^\infty \binom{\alpha}{k}(-1)^k s^{2(p-2\alpha-2+2k)-1}$. As  in the proof of Lemma   \ref{ML1} we obtain
	\begin{equation*}
		F'_{\alpha, p}(s) = 2G_{\alpha, p}(s)\left[\frac{s^3(1-s^4)^\alpha}{G_{\alpha, p}(s)}B\left(\frac{2+\alpha}{p}, 1 - \frac{2+\alpha}{p}\right) - \int_0^s \psi_{\alpha,p}(s)dt\right].
	\end{equation*}
	Note that $(1-s^4)^\alpha = \sum_{k = 0}^\infty \binom{\alpha}{k} (-s^4)^{k}$. Using this we get
	\begin{equation*}
		\begin{split}
			F'_{\alpha, p}(s) &= 2s^{2(p-2\alpha-2) -1}\left[\frac{s^3(1-s^4)^\alpha}{s^{2(p-2\alpha -2) -1}}B\left(\frac{2+\alpha}{p},1 - \frac{2+\alpha}{p}\right) - (1-s^4)^\alpha \int_0^s \psi_{\alpha, p}(t)dt\right]\\
						&=2s^{2p-4\alpha-5}(1-s^4)^\alpha\left[s^{8+4\alpha -2p} B\left(\frac{2+\alpha}{p}, 1 - \frac{2+\alpha}{p}\right) - \int_0^s \psi_{\alpha, p}(t)dt\right].
		\end{split}
	\end{equation*}
	Thus we have
	\begin{equation}\label{F}
		F'_{\alpha, p}(s) = 2s^{2p -4\alpha -5}(1-s^4)^\alpha \widetilde{G}_{\alpha, p}(s),
	\end{equation}
	where
	\begin{equation}\label{G1}
		\widetilde{G}_{\alpha, p}(s) = s^{8+4\alpha -2p} B\left(\frac{2+\alpha}{p}, 1 - \frac{2+\alpha}{p}\right) - \int_0^s \psi_{\alpha, p}(t)dt.
	\end{equation}
	By taking the derivative we obtain
	\begin{equation}\label{G2}
		\begin{split}
			\widetilde{G}'_{\alpha, p}(s) &= (8+4\alpha-2p)s^{7+4\alpha -2p}B\left(\frac{2+\alpha}{p}, 1 - \frac{2+\alpha}{p}\right) -\psi_{\alpha, p}(s)\\
								&=\psi_{\alpha, p}(s)(8+4\alpha -2p)B\left(\frac{2+\alpha}{p}, 1 - \frac{2+\alpha}{p}\right)\widetilde{E}_{\alpha, p}(s),
		\end{split}
	\end{equation}
	where
	\begin{equation}\label{E}
		\widetilde{E}_{\alpha, p}(s) = (1-s)^{\frac{2+\alpha}{p}}s^{8+4\alpha-2p - \frac{2+\alpha}{p}}-\frac{1}{(8+4\alpha -2p)B\left(\frac{2+\alpha}{p}, 1 - \frac{2+\alpha}{p}\right)}.
	\end{equation}
	 By differentiating we get
	\begin{equation*}
		\begin{split}
			 \widetilde{E}'_{\alpha, p}(s) &= (8+4\alpha -2p -\frac{2+\alpha}{p})s^{7+4\alpha -2p-\frac{2+\alpha}{p}}(1-s)^{\frac{2+\alpha}{p}}-\frac{2+\alpha}{p}(1-s)^{\frac{2+\alpha}{p}-1}s^{8+4\alpha-2p-\frac{2+\alpha}{p}}\\
								&=s^{7+4\alpha-2p-\frac{2+\alpha}{p}}(1-s)^{\frac{2+\alpha}{p}-1}\left[\left(8+4\alpha-2p-\frac{2+\alpha}{p}\right)(1-s)-\frac{2+\alpha}{p}s\right]\\
								&= s^{7+4\alpha-2p-\frac{2+\alpha}{p}}(1-s)^{\frac{2+\alpha}{p}-1}(8+4\alpha-2p)\left[\frac{8+4\alpha-2p-\frac{2+\alpha}{p}}{8+4\alpha-2p} - s\right].
		\end{split}
	\end{equation*}
	Define
	\begin{equation*}
		p_0 = \frac{8+4\alpha-2p-\frac{2+\alpha}{p}}{8+4\alpha-2p}.
	\end{equation*}
	Since $2+2\alpha < p < 2+\alpha +\sqrt{\alpha^2+\frac{7}{2}\alpha +3}$ implies that $8+4\alpha -2p -\frac{2+\alpha}{p} > 0$, we have that $p_0 \in (0,1)$. Now $\widetilde{E}'_{\alpha, p}(s) \geq 0$ on $[0,p_0]$ and $\widetilde{E}'_{\alpha, p}(s) \leq 0$ on $[p_0,1]$. Therefore $\widetilde{E}_{\alpha, p}$ is non-decreasing on $[0,p_0]$ and non-increasing on $[p_0,1]$. Thus $					\widetilde{E}_{\alpha, p}(p_0) = \max_{0\leq s \leq 1} \widetilde{E}_{\alpha, p}(s)$. Equation (\ref{E}) gives us
	\begin{equation*}
		\widetilde{E}_{\alpha,p}(0) =  \widetilde{E}_{\alpha,p}(1) = -\frac{1}{(8+4\alpha-2p)B\left(\frac{2+\alpha}{p},1-\frac{2+\alpha}{p}\right)} < 0.
	\end{equation*}
	Let us show that $\widetilde{E}_{\alpha, p}(p_0) > 0$. Assume to the contrary that $\widetilde{E}_{\alpha, p}(p_0) \leq 0$. Then $\widetilde{E}_{\alpha,p}(s) \leq 0$ for every $s \in [0,1]$. This implies that $\widetilde{G}'_{\alpha, p}(s) \leq 0$ for all $s \in [0,1]$. Hence $\widetilde{G}_{\alpha, p}$ is non-increasing on $[0,1]$. Since $\widetilde{G}_{\alpha, p}(0) = \widetilde{G}_{\alpha, p}(1) = 0$, we have that $\widetilde{G}_{\alpha, p} \equiv 0$ and therefore $\widetilde{G}'_{\alpha, p} \equiv 0$. Now it follows from \eqref{G2} that $\widetilde{E}_{\alpha, p} \equiv 0$, 
which contradicts (\ref{E}) and so $\widetilde{E}_{\alpha, p}(p_0) > 0$. There exists $p_1 \in (0,p_0)$ such that $\widetilde{E}_{\alpha, p} < 0$ on $[0,p_1)$, $\widetilde{E}_{\alpha, p}(p_1) = 0$ and $\widetilde{E}_{\alpha, p} > 0$ on $(p_1, p_0]$. Moreover, there exists $p_2 \in (p_0, 1)$ such that $\widetilde{E}_{\alpha, p} > 0$ on $[p_0, p_2)$, $\widetilde{E}_{\alpha, p}(p_2) = 0$ and $\widetilde{E}_{\alpha, p} < 0$ on $(p_2, 1]$. Thus $\widetilde{E}_{\alpha, p} \leq 0$ on $[0, p_1]$, $\widetilde{E}_{\alpha, p} \geq 0$ on $[p_1,p_2]$ and $\widetilde{E}_{\alpha, p} \leq 0$ on $[p_2, 1]$. Utilizing ($\ref{G2})$ we get $\widetilde{G}'_{\alpha, p} \leq 0$ on $[0, p_1]$, $\widetilde{G}'_{\alpha, p} \geq 0$ on $[p_1, p_2]$ and $\widetilde{G}'_{\alpha, p} \leq 0$ on $[p_2, 1]$. Thus $\widetilde{G}_{\alpha, p}$ is non-increasing on $[0, p_1]$, non-decreasing on $[p_1,p_2]$ and non-increasing on $[p_2,1]$. By using (\ref{G1}) we get $\widetilde{G}_{\alpha, p}(0) = \widetilde{G}_{\alpha, p}(1) = 0$. Hence there exists $p_3 \in (p_1, p_2)$ such that $\widetilde{G}_{\alpha, p} \leq 0$ on $[0,p_3]$, $\widetilde{G}_{\alpha, p}(p_3) = 0$ and $\widetilde{G}_{\alpha, p} \geq 0$ on $(p_3, 1]$. Thus $\widetilde{G}_{\alpha, p} \leq 0$ on $[0, p_3]$ and $\widetilde{G}_{\alpha, p} \geq 0$ on $[p_3, 1]$. By (\ref{F}) we get $F_{\alpha, p}' \leq 0$ on $[0, p_3]$ and $F'_{\alpha, p} \geq 0$ on $[p_3, 1]$. Thus $F_{\alpha, p}$ is non-increasing on $[0, p_3]$ and $F_{\alpha, p}$ is non-decreasing on $[p_3, 1]$. If $s \in [0, p_3]$, then $F_{\alpha, p}(s) \leq F_{\alpha, p}(0)$ and if $s \in [p_3, 1]$ then $F_{\alpha, p}(s) \leq F_{\alpha, p}(1) = 0$. It remains to prove that $F_{\alpha, p}(0) \leq 0$. 
Now
\begin{equation*}
F_{\alpha,p}(0) = B\left(\frac{2+\alpha}{p}, 1 - \frac{2 + \alpha}{p}\right)H_{\alpha, p}(0) - \int_0^1 \psi_{\alpha, p}(t)K_{\alpha,p}(0,t)dt \le 0
\end{equation*}		
is equivalent to 		
\begin{equation*}		
\int_0^1 I_t\left(\frac{2+\alpha}{p}, 1 - \frac{2+\alpha}{p}\right)t^{2p-4\alpha-5}(1-t^4)^\alpha dt - \frac{1}{4(\alpha +1)} \leq 0
\end{equation*}
by Lemma \ref{Condition1}, which completes the proof.
\end{proof}

\section{The norm of the Hilbert matrix operator on $A_\alpha^p$}

In this section we provide a proof for the conjecture on the norm of the Hilbert matrix operator on the weighted Bergman spaces $A_{\alpha}^p$ when $2+\alpha +\sqrt{\alpha^2+\frac{7}{2}\alpha+3} \le p < 2(2+\alpha)$ and a sufficient condition for the conjecture to hold when $2+2\alpha < p < 2 + \alpha +\sqrt{\alpha^2+\frac{7}{2}\alpha+3}$. The outline of this section is the following: we begin from the upper estimate (\ref{Normstart2}). In the same way as in \cite{4} a new upper estimate for the right-hand side of (\ref{Normstart2}) is obtained by integrating over an annulus $R_{t^2} = \{z \in \mathbb{C} : t^2 < |z| < 1 \}$. After deriving some further upper estimates we turn to the Taylor series expansion of $(1-r^2)^\alpha$ to be able to proceed further. Finally, we use Lemma \ref{ML1} for large $p$ and Lemma \ref{Small p case} for small $p$ to arrive at an upper estimate for the norm of $\mathcal{H}$.\\
\indent We are now ready to begin. Denote
\begin{equation*}
	\varphi(r) = \frac{1}{\pi}\int_0^{2\pi} |f(re^{it})|^p\, dt 
\end{equation*}
and $\mathcal{X}(r) = \varphi(r) -\varphi(0)$. If $f$ is analytic in $\mathbb{D}$, it follows that $\varphi$ is non-decreasing and differentiable on the interval $(0,1)$. Thus, $\mathcal{X}$ is also non-decreasing and differentiable on $(0,1)$. Hence,
\begin{equation*}
	\mathcal{X}' \geq 0 \mbox{ on } (0,1)
\end{equation*}
and 
\begin{equation*}
	\mathcal{X}(r) = \int_0^r \mathcal{X}'(s)ds,
\end{equation*}
where $0 < r < 1$. By (\ref{Normstart1}) we have
\begin{equation*}
	\lVert \mathcal{H}(f) \rVert_{A_{\alpha}^p} \leq \int_0^1 \lVert T_t(f)\rVert_{A_{\alpha}^p}dt
\end{equation*}
and by (\ref{Normstart2}) we have an upper estimate for the integrand:
\begin{equation*}
	\lVert T_t(f) \rVert_{A_\alpha^p} \leq   \frac{t^{\frac{2+\alpha}{p} -1}}{(1-t)^{\frac{2+\alpha}{p}}} \left((\alpha+1) \int_{D_t} |w|^{p-2\alpha-4}|f(w)|^p(1-|w|^2)^\alpha dA(w)\right)^{1/p}.
\end{equation*}
We want to work with the expression on the right-hand side of (\ref{Normstart2}). Now by (\ref{Normstart1}) and since $D_t \subset R_{t^2}$ it is enough to show that
\begin{equation}\label{O1}
	\int_0^1 \psi_{\alpha, p}(t)\left(\int_{R_{t^2}} |w|^{p-2\alpha -4}|f(w)|^p dA_\alpha (w)\right)^{1/p}dt \leq \frac{\pi}{\sin{\frac{(2+\alpha)\pi}{p}}}\lVert f \rVert_{A_{\alpha}^p}.
\end{equation}
Now
\begin{equation*}
	\frac{\pi}{\sin{\frac{(2+\alpha)\pi)}{p}}} \lVert f \rVert_{A_{\alpha}^p} = \int_0^1 \psi_{\alpha, p}(t)\left((\alpha+1)\int_0^1 \varphi(r)r(1-r^2)^\alpha dr\right)^{1/p}dt
\end{equation*}
and
\begin{equation*}
	\int_{R_{t^2}} |w|^{p-2\alpha -4} |f(w)|^{p} dA_\alpha(w) = (\alpha+1)\int_{t^2}^1 r^{p-2\alpha -3}\varphi(r) (1-r^2)^\alpha dr.
\end{equation*}
Utilizing this we get that (\ref{O1}) holds if the following inequality is true
\begin{equation}\label{O2}
	\begin{split}
		&\int_0^1 \psi_{\alpha, p}(t)\left(\int_{t^2}^1 r^{p-2\alpha -3} \varphi(r)(1-r^2)^\alpha dr\right)^{1/p}dt \leq\\
		&\int_0^1 \psi_{\alpha, p}(t)\left(\int_0^1 \varphi(r)r(1-r^2)^\alpha dr\right)^{1/p}dt.
	\end{split}
\end{equation}
By the inequality $x^\beta - y^\beta \leq \beta y^{\beta -1}(x-y)$, where $x >0, y >0$ and $\beta \in (0,1)$, we have
\begin{equation*}
	\begin{split}
	&\left(\int_{t^2}^1 r^{p-2\alpha -3} \varphi(r)(1-r^2)^\alpha dr\right)^{1/p} -\left(\int_0^1 \varphi(r)r(1-r^2)^\alpha dr\right)^{1/p}\\
	&\leq \frac{1}{p}\left(\int_0^1 \varphi(r) r(1-r^2)^\alpha dr\right)^{1/p-1}\left(\int_{t^2}^1 r^{p-2\alpha -3} \varphi(r)(1-r^2)^\alpha dr - \int_0^1 \varphi(r)r(1-r^2)^\alpha dr\right).
	\end{split}
\end{equation*}
Therefore, we arrive at the inequality
\begin{equation*}
	\begin{split}
		&\int_0^1 \psi_{\alpha, p}(t)\left[\left(\int_{t^2}^1 r^{p-2\alpha -3} \varphi(r)(1-r^2)^\alpha dr\right)^{1/p} -\left(\int_0^1 \varphi(r)r(1-r^2)^\alpha dr\right)^{1/p}\right]dt\\
		&\leq \frac{1}{p}\left(\int_0^1 \varphi(r) r(1-r^2)^\alpha dr\right)^{1/p -1} \\ 
		&\cdot \int_0^1 \psi_{\alpha, p}(t)\left(\int_{t^2}^1 r^{p-2\alpha-3} \varphi(r)(1-r^2)^\alpha dr - \int_0^1 \varphi(r)r(1-r^2)^\alpha dr\right)dt.
	\end{split}
\end{equation*}
So instead by looking at the expression on the right-hand side, we get that (\ref{O2}) holds if
\begin{equation}
	\int_0^1 \psi_{\alpha, p}(t)\left(\int_{t^2}^1 r^{p-2\alpha-3} \varphi(r)(1-r^2)^\alpha dr - \int_0^1 \varphi(r)r(1-r^2)^\alpha dr\right)dt \leq 0
\end{equation}
or equivalently
\begin{equation}\label{O4}
	\begin{split}
		&\int_0^1 \psi_{\alpha, p}(t)\int_{t^2}^1 r^{p-2\alpha-3} \mathcal{X}(r)(1-r^2)^\alpha drdt\\
 		&+ \varphi(0) \left[\int_0^1 \psi_{\alpha, p}(t) \int_{t^2}^1 r^{p-2\alpha-3}(1-r^2)^\alpha drdt-\int_0^1 \psi_{\alpha, p}(t) \int_0^1 r(1-r^2)^\alpha drdt\right]\\
		&\leq \int_0^1 \psi_{\alpha, p}(t) \int_0^1 \mathcal{X}(r)r(1-r^2)^\alpha drdt.
	\end{split}
\end{equation}
To proceed, we do two things: prove that the first term on the left-hand side is less than or equal to the term on the right-hand side, and prove that the expression in the brackets is less than zero. We begin with the second part. Recall that
\begin{equation*}
	F_{\alpha, p}(s) =  B\left(\frac{2+\alpha}{p}, 1 - \frac{2 + \alpha}{p}\right)H_{\alpha, p}(s) - \int_0^1 \psi_{\alpha, p}(t)K_{\alpha,p}(s,t)dt.
\end{equation*}
Utilizing the definition of $F_{\alpha, p}$ and the dominated convergence theorem, we see that
\begin{equation*}
	\begin{split}
		&\int_0^1 \psi_{\alpha, p}(t) \int_{t^2}^1 r^{p-2\alpha-3}(1-r^2)^\alpha drdt-\int_0^1 \psi_{\alpha, p}(t) \int_0^1 r(1-r^2)^\alpha drdt\\
		&= \int_0^1 \psi_{\alpha, p}(t) \int_{t^2}^1 r^{p-2\alpha-3}\left(\sum_{k =0}^\infty \binom{\alpha}{k}(-1)^k r^{2k}\right) drdt-\int_0^1 \psi_{\alpha, p}(t)\frac{1}{2(\alpha+1)}dt\\
		&=\int_0^1 \psi_{\alpha, p}(t)\left( \sum_{k = 0}^\infty \binom{\alpha}{k}(-1)^k \int_{t^2}^1 r^{p-2\alpha-3+2k}dr-\frac{1}{2(\alpha+1)}\right)dt\\
		&=\int_0^1 \psi_{\alpha, p}(t)\left(\sum_{k = 0}^\infty \binom{\alpha}{k}(-1)^k \frac{1 -t^{2(p-2\alpha-2+2k)}}{p-2\alpha-2+2k}-\frac{1}{2(\alpha+1)}\right)dt\\
		&=\int_0^1 \psi_{\alpha, p}(t)\left(\sum_{k = 0}^\infty \left[\binom{\alpha}{k}(-1)^k \frac{1}{p-2\alpha -2 +2k}\right]-\frac{1}{2(\alpha+1)}\right)dt\\
		&- \int_0^1 \psi_{\alpha, p}(t)\left(\sum_{k = 0}^\infty \binom{\alpha}{k}(-1)^k\frac{t^{2(p-2\alpha-2+2k)}}{p - 2\alpha-2+2k}\right)dt\\
		&=B\left(\frac{2+\alpha}{p}, 1 - \frac{2+\alpha}{p}\right)\left(\sum_{k = 0}^\infty \binom{\alpha}{k}(-1)^k\frac{1}{p-2\alpha-2+2k} - \frac{1}{2(\alpha+1)}\right)\\
		&-\int_0^1 \psi_{\alpha, p}(t)\left(\sum_{k = 0}^\infty \binom{\alpha}{k}(-1)^k \frac{t^{2(p-2\alpha-2+2k)}}{p-2\alpha-2+2k}\right)dt\\
		&= F_{\alpha, p}(0) \leq 0,
	\end{split}
\end{equation*}
which holds according to Lemma \ref{ML1}, if $2 + \alpha + \sqrt{\alpha^2 +\frac{7}{2}\alpha +3} \leq p < 2(2 + \alpha)$, $\alpha \geq 0$. If instead $2+2\alpha < p < 2 +\alpha + \sqrt{\alpha^2+\frac{7}{2}\alpha +3}$ and condition (\ref{Condition}) holds, then Lemma \ref{Small p case} gives the result. For the second part we need to show that
\begin{equation}\label{O5}
	\int_0^1 \psi_{\alpha, p}(t)\int_{t^2}^1 r^{p-2\alpha-3} \mathcal{X}(r)(1-r^2)^\alpha drdt \leq \int_0^1 \psi_{\alpha, p}(t) \int_0^1 \mathcal{X}(r)r(1-r^2)^\alpha drdt.
\end{equation}
By Fubini's theorem we obtain
\begin{equation*}
	\begin{split}
		&\int_{t^2}^1 r^{p-3-2\alpha}\mathcal{X}(r)(1-r^2)^\alpha dr = \int_{t^2}^1 r^{p-3-2\alpha}(1-r^2)^\alpha\int_0^r \mathcal{X}'(s)dsdr\\
		&=\int_0^1 \mathcal{X}'(s)\int_{\max\{s,t^2\}}^1 r^{p-3-2\alpha}(1-r^2)^\alpha drds\\
		&= \int_0^1 \mathcal{X}'(s) \int_{\max\{s,t^2\}}^1 r^{p-3-2\alpha}\sum_{k = 0}^\infty \binom{\alpha}{k}(-1)^k r^{2k}drds.
	\end{split}
\end{equation*}
Now, by the dominated convergence theorem
\begin{equation*}
	\begin{split}
		&\int_0^1 \mathcal{X}'(s)\left(\sum_{k = 0}^\infty \binom{\alpha}{k}(-1)^k\int_{\max\{s,t^2\}}^1r^{p-3-2\alpha+2k}dr\right)ds\\
		&=\int_0^1 \mathcal{X}'(s)\left( \sum_{k = 0}^\infty \frac{\binom{\alpha}{k}(-1)^k}{p-2-2\alpha+2k}\left(1 - \max\{s,t^2\}^{p-2-2\alpha+2k}\right)\right)ds
	\end{split}
\end{equation*}
and
\begin{equation*}
	\begin{split}
		\int_0^1 \mathcal{X}(r)r(1-r^2)^\alpha dr &= \int_0^1 \int_0^r \mathcal{X}'(s)ds\, r(1-r^2)^\alpha dr\\
									&=\int_0^1 \mathcal{X}'(s)\int_s^1 r(1-r^2)^\alpha drds\\
									&=\int_0^1 \mathcal{X}'(s)\left[\frac{1}{2(\alpha+1)}(1-s^2)^{\alpha+1}\right]ds.
	\end{split}
\end{equation*}
So inequality (\ref{O5}) takes the form
\begin{equation*}
	\begin{split}
		&\int_0^1\psi_{\alpha, p}(t)\int_0^1 \mathcal{X}'(s)\left( \sum_{k = 0}^\infty \frac{\binom{\alpha}{k}(-1)^k}{p-2-2\alpha+2k}(1 - \max\{s,t^2\}^{p-2-2\alpha+2k})\right)dsdt\\
		&\leq \int_0^1 \psi_{\alpha, p}(t) \int_0^1 \mathcal{X}'(s)\left[\frac{1}{2(\alpha+1)}(1-s^2)^{\alpha+1}\right]dsdt
	\end{split}
\end{equation*}
or equivalently
\begin{equation*}
	\begin{split}
		&\int_0^1 \mathcal{X}'(s)\Bigg[\sum_{k = 0}^\infty \frac{\binom{\alpha}{k}(-1)^k}{p-2-2\alpha+2k}\int_0^1\psi_{\alpha,p}(t)dt -\left(\frac{1}{2(\alpha+1)}(1-s^2)^{\alpha+1}\right)\int_0^1 \psi_{\alpha, p}(t)dt\\
		&-\int_0^1 \psi_{\alpha, p}(t) \sum_{k = 0}^\infty \frac{\binom{\alpha}{k}(-1)^k}{p-2-2\alpha+2k} \max\{s,t^2\}^{p-2-2\alpha +2k}dt\Bigg]ds \leq 0,
	\end{split}
\end{equation*}
which is the same as
\begin{equation*}
	\begin{split}
		&\int_0^1 \mathcal{X}'(s)\Bigg[\left(\sum_{k = 0}^\infty \frac{\binom{\alpha}{k}(-1)^k}{p-2-2\alpha+2k} - \frac{1}{2(\alpha+1)}(1-s^2)^{\alpha+1}\right)B\left(\frac{2+\alpha}{p}, 1 - \frac{2+\alpha}{p}\right)\\
		&-\int_0^1 \psi_{\alpha,p}(t) \sum_{k = 0}^\infty \frac{\binom{\alpha}{k}(-1)^k}{p-2-2\alpha+2k}\max\{s,t^2\}^{p-2-2\alpha+2k}dt\Bigg]ds \leq 0.
	\end{split}
\end{equation*}
Note that $\sum_{k = 0}^\infty \frac{\binom{\alpha}{k}(-1)^k}{p-2-2\alpha+2k}\max\{s,t^2\}^{p-2-2\alpha+2k} = K_{\alpha, p}(\sqrt{s},t)$.
Hence, we have to show that
\begin{equation*}
	\begin{split}
		&\int_0^1 \mathcal{X}'(s)\Bigg[\left(\sum_{k = 0}^\infty \frac{\binom{\alpha}{k}(-1)^k}{p-2-2\alpha+2k} - \frac{1}{2(\alpha+1)}(1-s^2)^{\alpha+1}\right)B\left(\frac{2+\alpha}{p}, 1 - \frac{2+\alpha}{p}\right)\\
		&-\int_0^1 \psi_{\alpha,p}(t)K_{\alpha,p}(\sqrt{s},t)dt \Bigg]ds \leq 0.
	\end{split}
\end{equation*}
Observe that $H_{\alpha, p}(\sqrt{s}) = \sum_{k = 0}^\infty \frac{\binom{\alpha}{k}(-1)^k}{p-2-2\alpha+2k} - \frac{1}{2(\alpha+1)}(1-s^2)^{\alpha+1}$, so we get
\begin{equation*}
	\begin{split}
		&\int_0^1 \mathcal{X}'(s)\left[H_{\alpha, p}(\sqrt{s})B\left(\frac{2+\alpha}{p}, 1 -\frac{2+\alpha}{p}\right) - \int_0^1\psi_{\alpha, p}(t)K_{\alpha, p}(\sqrt{s}, t)dt\right]ds\\
		&= \int_0^1 \mathcal{X}'(s)F_{\alpha, p}(\sqrt{s})ds \leq 0,
	\end{split}
\end{equation*}
which holds by Lemma \ref{ML1} when $2+\alpha +\sqrt{\alpha^2+\frac{7}{2}\alpha+3} \leq p < 2(2+\alpha)$ and by Lemma \ref{Small p case} when $2+2\alpha < p < 2+\alpha + \sqrt{\alpha^2+\frac{7}{2}\alpha +3}$ and given that condition \eqref{Condition} holds. 
Therefore, when $2 + \alpha + \sqrt{\alpha^2 + \frac{7}{2}\alpha +3} \leq p < 2(2 + \alpha)$, we have
\begin{equation*}
	\lVert \mathcal{H} \rVert_{A_{\alpha}^p \to A_{\alpha}^p} \leq \frac{\pi}{\sin\frac{(2+\alpha)\pi}{p}}.
\end{equation*}
In the case of $2+2\alpha < p < 2+\alpha+\sqrt{\alpha^2+\frac{7}{2}\alpha +3}$ it holds that
\begin{equation*}
	\lVert \mathcal{H} \rVert_{A_{\alpha}^p \to A_{\alpha}^p} \leq \frac{\pi}{\sin\frac{(2+\alpha)\pi}{p}},
\end{equation*}
if $\alpha$ is such that condition (\ref{Condition}) holds. We are now ready to proceed to the proof of the main theorem.
\\
\begin{proof}[\textbf{Proof of Theorem \ref{Main Theorem}}]
	Note that the lower bound of the norm of $\mathcal{H}$ holds for all $\alpha \geq 0$ and all $2+2\alpha < p <2(2+\alpha)$ by Theorem 1.1 in \cite{6}. For the upper bound of the norm of $\mathcal{H}$ we have by the above argument 
that if (a) is true, then the conclusion of the theorem holds. If instead (b) is assumed, then by Lemma \ref{Small p case} we have that $F_{\alpha, p}(s)\le 0$ for all $s \in [0,1]$, 
where $F_{\alpha, p}$ is the function defined in the proof of Lemma \ref{ML1}. 
Utilizing this in combination with the argument on the previous pages we then obtain the upper bound of the norm of $\mathcal{H}$.
\end{proof}
The next lemma gives us a stronger condition than condition (\ref{Condition}). This new condition is useful for our purposes.
Note that when $\alpha = 0$ the inequality in the lemma becomes
\begin{equation*}
	\frac{1}{2p-4}-\frac{1}{(2p-4)^2}\frac{1}{B\left(\frac{2}{p}, 2p-4\right)} -\frac{1}{4} \leq 0,
\end{equation*}
which is equivalent to
\begin{equation*}
	B\left(\frac{2}{p}, 2p-4\right) \leq \frac{1}{(p-2)(4-p)}.
\end{equation*}
The inequality $B\left(\frac{2}{p}, 2p-4\right) \leq \frac{1}{(p-2)(4-p)}$ holds for $2 < p < 4$, by Lemma 2.5 in \cite{4} or Lemma 3.2 in \cite{8}. The above inequality is one ingredient in the proof of the main result in \cite{4}.
\begin{lem}\label{WC2} Let $\alpha \in [0,1]$ or $\alpha \in [2,3]$ and $2+2\alpha < p < 2(2+\alpha)$. If
	\begin{equation*}
		\begin{split}
			&\frac{1}{2p-4\alpha-4}-\frac{1}{(2p-4\alpha-4)^2}\frac{1}{B\left(\frac{2+\alpha}{p},2p-4\alpha-4\right)}\\
			&-\alpha\left(\frac{1}{2p-4\alpha}-\frac{1}{(2p-4\alpha)^2}\frac{1}{B\left(\frac{2+\alpha}{p},2p-4\alpha\right)}\right)\\
			&+\frac{\alpha(\alpha-1)}{2}\left(\frac{1}{2p-4\alpha+4}-\frac{1}{(2p-4\alpha+4)^2}\frac{1}{B\left(\frac{2+\alpha}{p},2p-4\alpha+4\right)}\right) -\frac{1}{4(\alpha+1)} \leq 0,
		\end{split}
	\end{equation*}
	then
	\begin{equation*}
		\int_0^1 I_t\left(\frac{2+\alpha}{p}, 1 - \frac{2+\alpha}{p}\right)t^{2p-4\alpha-5}(1-t^4)^\alpha dt - \frac{1}{4(\alpha +1)} \leq 0.
	\end{equation*}
\end{lem}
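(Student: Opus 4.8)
The plan is to bound $(1-t^4)^\alpha$ from above by the partial sum of its binomial expansion through the $t^8$ term, substitute this into the integral, and evaluate the three resulting integrals in closed form; the two ranges of $\alpha$ in the hypothesis are exactly those for which the truncation is an overestimate.

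First I would record the pointwise inequality
\begin{equation*}
	(1-t^4)^\alpha \le 1 - \alpha t^4 + \frac{\alpha(\alpha-1)}{2}t^8, \qquad 0 \le t \le 1,
\end{equation*}
valid whenever $\alpha \in [0,1]$ or $\alpha \in [2,3]$. Indeed, $(1-t^4)^\alpha = \sum_{k=0}^\infty \binom{\alpha}{k}(-1)^k t^{4k}$, and writing $\binom{\alpha}{k}(-1)^k = \frac{\alpha(\alpha-1)\cdots(\alpha-k+1)}{k!}(-1)^k$ and counting the negative factors shows that $\binom{\alpha}{k}(-1)^k \le 0$ for every $k \ge 3$ on these $\alpha$-ranges (the terms vanishing at the integer endpoints $\alpha = 0,1,2,3$). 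Hence the tail $\sum_{k \ge 3}$ is nonpositive and can be dropped.

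Next, since $I_t\!\left(\frac{2+\alpha}{p}, 1 - \frac{2+\alpha}{p}\right) t^{2p-4\alpha-5} \ge 0$ on $[0,1]$, I multiply the displayed inequality by this factor and integrate, which reduces the proof to evaluating $\int_0^1 I_t(x,1-x)\,t^{2\beta-1}\,dt$ with $x = \frac{2+\alpha}{p}$ and $2\beta \in \{2p-4\alpha-4,\ 2p-4\alpha,\ 2p-4\alpha+4\}$; note $p > 2+2\alpha$ makes all three exponents positive and $0 < x < 1$. Integration by parts, using $d I_t(x,1-x) = \frac{t^{x-1}(1-t)^{-x}}{B(x,1-x)}dt$, gives
\begin{equation*}
	\int_0^1 I_t(x,1-x)\,t^{2\beta-1}\,dt = \frac{1}{2\beta} - \frac{1}{2\beta\,B(x,1-x)}\int_0^1 t^{2\beta+x-1}(1-t)^{-x}\,dt = \frac{1}{2\beta} - \frac{B(2\beta+x,1-x)}{2\beta\,B(x,1-x)},
\end{equation*}
and the Gamma-function identities $B(x,1-x) = \Gamma(x)\Gamma(1-x)$ and $2\beta\,\Gamma(2\beta) = \Gamma(2\beta+1)$ collapse this to $\frac{1}{2\beta} - \frac{1}{(2\beta)^2 B(x,2\beta)}$. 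Substituting $2\beta = 2p-4\alpha-4$, $2p-4\alpha$, $2p-4\alpha+4$ identifies the right-hand side of the integrated inequality with the left-hand side of the assumed inequality before the term $-\frac{1}{4(\alpha+1)}$ is subtracted; subtracting that term from both sides and invoking the hypothesis finishes the proof.

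The computation is routine; the step that requires the most care is the sign bookkeeping for $\binom{\alpha}{k}(-1)^k$, which is precisely what singles out the admissible ranges $\alpha \in [0,1] \cup [2,3]$ (for larger $\alpha$ a term with $k \ge 4$ becomes positive and the truncation fails). One should also check that the boundary terms in the integration by parts vanish — at $t = 0$ because $2\beta > 0$ and $I_t \to 0$, at $t = 1$ because $I_1(x,1-x) = 1$ — and that the Beta integral $B(2\beta+x,1-x)$ converges, which needs $2\beta + x > 0$ and $x < 1$; both are immediate from the standing assumptions. I do not expect a genuine obstacle.
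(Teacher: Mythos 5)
Your proposal is correct and follows essentially the same route as the paper: truncate $(1-t^4)^\alpha$ by $1-\alpha t^4+\tfrac{\alpha(\alpha-1)}{2}t^8$ (the paper cites this from Bo\v{z}in--Karapetrovi\'{c}, while you verify the sign of the tail coefficients directly, which is a nice self-contained touch), then evaluate the three resulting integrals by integration by parts and rewrite them via $\Gamma(z+1)=z\Gamma(z)$ and $B(x,y)=\Gamma(x)\Gamma(y)/\Gamma(x+y)$ to match the hypothesized expression. Working with $I_t$ directly instead of factoring out $B\left(\frac{2+\alpha}{p},1-\frac{2+\alpha}{p}\right)$ and handling $B_t$ as the paper does is only a cosmetic difference; all your identities and convergence checks are sound.
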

\begin{proof}
	We begin with
	\begin{equation*}
		\begin{split}
			&\int_0^1 I_t\left(\frac{2+\alpha}{p}, 1 - \frac{2+\alpha}{p}\right)t^{2p-4\alpha-5}(1-t^4)^\alpha dt - \frac{1}{4(\alpha +1)}\\
			&=\frac{1}{B\left(\frac{2+\alpha}{p}, 1-\frac{2+\alpha}{p}\right)}\left(\int_0^1 B_t\left(\frac{2+\alpha}{p}, 1 - \frac{2+\alpha}{p}\right)t^{2p-4\alpha-5}(1-t^4)^\alpha dt - \frac{B\left(\frac{2+\alpha}{p}, 1-\frac{2+\alpha}{p}\right)}{4(\alpha +1)}\right).
		\end{split}
	\end{equation*}
	Working with the expression in the parenthesis and using $(1-t^4)^\alpha \leq 1 - \alpha t^4+\frac{\alpha(\alpha -1)}{2}t^8$, see \cite[p. 531]{4}, we get
	\begin{equation*}
		\begin{split}
			&\int_0^1 B_t\left(\frac{2+\alpha}{p}, 1 - \frac{2+\alpha}{p}\right)t^{2p-4\alpha-5}(1-t^4)^\alpha dt - \frac{B\left(\frac{2+\alpha}{p}, 1-\frac{2+\alpha}{p}\right)}{4(\alpha +1)}\\
			&\leq \int_0^1 B_t\left(\frac{2+\alpha}{p}, 1 - \frac{2+\alpha}{p}\right)t^{2p-4\alpha-5}dt -\alpha \int_0^1 B_t\left(\frac{2+\alpha}{p}, 1-\frac{2+\alpha}{p}\right)t^{2p-4\alpha-1}dt\\ 
			&+\frac{\alpha(\alpha-1)}{2}\int_0^1 B_t\left(\frac{2+\alpha}{p}, 1 -\frac{2+\alpha}{p}\right)t^{2p-4\alpha+3}dt-\frac{B\left(\frac{2+\alpha}{p}, 1-\frac{2+\alpha}{p}\right)}{4(\alpha +1)}.
		\end{split}
	\end{equation*}
	Denote
	\begin{equation*}
		\begin{split}
			&J_1 = \int_0^1 B_t\left(\frac{2+\alpha}{p}, 1 - \frac{2+\alpha}{p}\right)t^{2p-4\alpha-5}dt;\\
			&J_2 = \int_0^1 B_t\left(\frac{2+\alpha}{p}, 1-\frac{2+\alpha}{p}\right)t^{2p-4\alpha-1}dt;\\
			&J_3 = \int_0^1 B_t\left(\frac{2+\alpha}{p}, 1 -\frac{2+\alpha}{p}\right)t^{2p-4\alpha+3}dt.
		\end{split}
	\end{equation*}
	Using integration by parts we get
	\begin{equation*}
		\begin{split}
			J_1 &= \int_0^1 \left(\int_0^t s^{\frac{2+\alpha}{p}-1}(1-s)^{-\frac{2+\alpha}{p}}ds\right) t^{2p-4\alpha -5}dt\\
				&=\left[\frac{1}{2p-4\alpha-4}t^{2p-4\alpha-4}\int_0^t s^{\frac{2+\alpha}{p}-1}(1-s)^{-\frac{2+\alpha}{p}}ds\right]_0^1\\
				&-\int_0^1 \frac{1}{2p-4\alpha-4}t^{2p-4\alpha-5+\frac{2+\alpha}{p}}(1-t)^{-\frac{2+\alpha}{p}}dt\\
				&=\frac{1}{2p-4\alpha-4}\left[B\left(\frac{2+\alpha}{p}, 1-\frac{2+\alpha}{p}\right) - B\left(2p-4\alpha-4+\frac{2+\alpha}{p}, 1 -\frac{2+\alpha}{p}\right)\right].
		\end{split}
	\end{equation*}
	For $J_2$ and $J_3$ we similarly get
	\begin{equation*}
		\begin{split}
			&J_2 = \frac{1}{2p-4\alpha}\left[B\left(\frac{2+\alpha}{p}, 1 -\frac{2+\alpha}{p}\right) - B\left(2p-4\alpha+\frac{2+\alpha}{p}, 1-\frac{2+\alpha}{p}\right)\right];\\
			&J_3 = \frac{1}{2p-4\alpha+4}\left[B\left(\frac{2+\alpha}{p}, 1 -\frac{2+\alpha}{p}\right) - B\left(2p-4\alpha+4+\frac{2+\alpha}{p}, 1-\frac{2+\alpha}{p}\right)\right].
		\end{split}
	\end{equation*}
	By combining these we arrive at 
	\begin{equation*}
		\begin{split}
	&J_1-\alpha J_2+\frac{\alpha(\alpha-1)}{2}J_3-\frac{B\left(\frac{2+\alpha}{p}, 1-\frac{2+\alpha}{p}\right)}{4(\alpha+1)} \\
	&= B\left(\frac{2+\alpha}{p}, 1-\frac{2+\alpha}{p}\right)\Bigg[\frac{1}{2p-4\alpha-4}\left(1-\frac{B\left(2p-4\alpha-4+\frac{2+\alpha}{p}, 1-\frac{2+\alpha}{p}\right)}{B\left(\frac{2+\alpha}{p}, 1-\frac{2+\alpha}{p}\right)}\right)\\
											&-\frac{\alpha}{2p-4\alpha}\left(1-\frac{B\left(2p-4\alpha+\frac{2+\alpha}{p}, 1-\frac{2+\alpha}{p}\right)}{B\left(\frac{2+\alpha}{p}, 1-\frac{2+\alpha}{p}\right)}\right) \\
											&+\frac{\alpha(\alpha-1)}{2(2p-4\alpha+4)}\left(1-\frac{B\left(2p-4\alpha+4+\frac{2+\alpha}{p}, 1-\frac{2+\alpha}{p}\right)}{B\left(\frac{2+\alpha}{p}, 1-\frac{2+\alpha}{p}\right)}\right)
											-\frac{1}{4(\alpha +1)}\Bigg].
		\end{split}
	\end{equation*}
	Now finally we rewrite the Beta functions occurring in the above expression by using the formulas $\Gamma(z+1) = z\Gamma(z)$ and $B(x,y) = \frac{\Gamma(x)\Gamma(y)}{\Gamma(x+y)}$ and arrive at the inequality 
\begin{equation*}
	\begin{split}
		&\frac{1}{2p-4\alpha-4}-\frac{1}{(2p-4\alpha-4)^2}\frac{1}{B\left(\frac{2+\alpha}{p},2p-4\alpha-4\right)}\\
		&-\alpha\left(\frac{1}{2p-4\alpha}-\frac{1}{(2p-4\alpha)^2}\frac{1}{B\left(\frac{2+\alpha}{p},2p-4\alpha\right)}\right)\\
		&+\frac{\alpha(\alpha-1)}{2}\left(\frac{1}{2p-4\alpha+4}-\frac{1}{(2p-4\alpha+4)^2}\frac{1}{B\left(\frac{2+\alpha}{p},2p-4\alpha+4\right)}\right) -\frac{1}{4(\alpha+1)} \leq 0,
	\end{split}
\end{equation*}
which completes the proof.
\end{proof}
We now turn our attention to small $\alpha \geq 0$. The following result shows that condition (\ref{Condition}) holds for some values of the parameter $p$.

\begin{prop} \ 
	\begin{enumerate}

		\item[(a)] If $0 < \alpha \leq \frac{1}{19}$ and $2+2\alpha < p < \frac{5}{2} + 2\alpha$, then condition (\ref{Condition}) holds;

		\item[(b)] If $0 < \alpha \leq \frac{1}{15000}$ and $\frac{5}{2}+2\alpha \le p \le 3+2\alpha$, then condition (\ref{Condition}) holds.
	\end{enumerate}
\end{prop}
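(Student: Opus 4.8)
The plan is to derive the Proposition from Lemma~\ref{WC2}. Since $\alpha\le\frac1{19}$ (resp.\ $\alpha\le\frac1{15000}$) lies in $[0,1]$, Lemma~\ref{WC2} applies, so it is enough to verify its hypothesis, i.e.\ the displayed inequality involving $B\big(\frac{2+\alpha}{p},2p-4\alpha-4\big)$, $B\big(\frac{2+\alpha}{p},2p-4\alpha\big)$, $B\big(\frac{2+\alpha}{p},2p-4\alpha+4\big)$, on the two $(\alpha,p)$-regions. I would first dispose of the last two bracketed terms of that inequality: since $\frac{2+\alpha}{p}\in(0,1)$ while $2p-4\alpha$ and $2p-4\alpha+4$ exceed $1$, Lemma~\ref{lower bound1}(b) (with the arguments exchanged) gives $B\big(\frac{2+\alpha}{p},y\big)\ge\frac1y$ for $y=2p-4\alpha$ and $y=2p-4\alpha+4$, hence $\frac1y-\frac1{y^2B(\cdot,y)}\ge0$; as $\alpha>0$ and $\alpha(\alpha-1)\le0$, the contributions $-\alpha(\cdots)$ and $\frac{\alpha(\alpha-1)}{2}(\cdots)$ are both $\le0$ and may be dropped. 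It then remains to prove
\[
B\Big(\frac{2+\alpha}{p},\,2p-4\alpha-4\Big)\ \le\ \frac{\alpha+1}{(p-2-2\alpha)(4+4\alpha-p)},
\]
which is the weighted analogue of $B\big(\frac2p,2p-4\big)\le\frac1{(p-2)(4-p)}$ (Lemma~2.5 of \cite{4}, Lemma~3.2 of \cite{8}), to which it reduces when $\alpha=0$.

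To verify this estimate I would set $s=p-2-2\alpha$, so the second argument becomes $2s$, which is free of $\alpha$ and ranges over $(0,\frac12)$ in case~(a) and over $[\frac12,1]$ in case~(b), while $x:=\frac{2+\alpha}{p}=\frac{2+\alpha}{2+2\alpha+s}$. The key structural point is that $\frac1{2s}-\frac1{(2s)^2B(2s,x)}-\frac1{4(\alpha+1)}$ is increasing in $\alpha$ with $s$ held fixed: indeed $\frac{dx}{d\alpha}=\frac{s-2}{(2+2\alpha+s)^2}<0$ on the whole range, $B(2s,\cdot)$ is decreasing in its second argument, and $-\frac1{4(\alpha+1)}$ is increasing. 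Hence it suffices to check the inequality at the endpoint value $\alpha=\frac1{19}$ (resp.\ $\alpha=\frac1{15000}$), for all admissible $s$. With $\alpha$ fixed there, one estimates $B$: when $2s\in(0,1)$ (case~(a)) both arguments lie in $(0,1]$ and the reversed form of Lemma~\ref{lower bound1}(b) gives $B(x,2s)\le\frac1{2sx}\cdot\frac{x+2s}{1+2sx}$; when $2s\in[1,2]$ (case~(b)) Lemma~\ref{lower bound1}(a) gives $B(2s,x)\le\frac1{2s}+\frac1x-1$, to be sharpened to the exact value $B(\cdot,2)=\frac1{x(x+1)}$ near $p=3+2\alpha$. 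Substituting $x=\frac{2+\alpha}{2+2\alpha+s}$ turns the target into an explicit one-variable rational inequality in $s$; at $\alpha=0$ it collapses to $(4-p)p(p-1)^2\le2(5p-8)$ in case~(a) and to $(p-3)^2\ge0$ in case~(b), and for the chosen $\alpha$ it is then dispatched by elementary calculus on each subinterval.

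The main obstacle is case~(b). Its $\alpha=0$ baseline is the perfect square $(p-3)^2\ge0$, so the slack of the target inequality is only of order $(3-p)^2$ throughout $[\frac52+2\alpha,3+2\alpha]$ and vanishes entirely at $p=3+2\alpha$, whereas the perturbation caused by turning on $\alpha>0$ is, after any elementary estimate of the Beta function, linear in $\alpha$. Beating a quadratically small quantity by a linearly small one is possible only for very small $\alpha$, and only if no estimate wastes anything at $p=3+2\alpha$ itself — which forces the exact identity $B(\frac{2+\alpha}{p},2)=\frac1{x(x+1)}$ there in place of Lemma~\ref{lower bound1}, and a careful control of the remaining $s$-range. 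Propagating this tightness is exactly what pins the admissible range down to $\alpha\le\frac1{15000}$; in case~(a) the slack stays bounded away from zero on the $p$-interval, which is why the much milder bound $\alpha\le\frac1{19}$ is sufficient there. Should the reduced estimate above turn out to have no interior slack, one retains the two correction terms and estimates them via Lemma~\ref{lower bound1} as well; this only strengthens the inequality, but still at the price of a minuscule threshold on $\alpha$.
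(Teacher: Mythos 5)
Your reductions are correct and in fact cleaner than what the paper does. Passing to Lemma \ref{WC2} is the paper's route as well, but from there you diverge: discarding the $-\alpha(\cdots)$ and $\frac{\alpha(\alpha-1)}{2}(\cdots)$ terms is legitimate (for $x=\frac{2+\alpha}{p}\le 1$ one indeed has $B(x,y)\ge \frac1y$, so those brackets are nonnegative and their coefficients nonpositive), and the substitution $s=p-2-2\alpha$ together with the monotonicity in $\alpha$ at fixed $s$ (I checked: $x$ decreases in $\alpha$, $B$ decreases in each argument, $-\frac{1}{4(\alpha+1)}$ increases) correctly reduces both parts to a one-variable inequality at the endpoint value of $\alpha$. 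The paper instead keeps all three Beta terms, bounds each via Lemma \ref{lower bound1}, clears denominators into explicit polynomials $g_0,h,u$, and controls $h$ by repeated differentiation for (a) and by a degree-two Taylor bound plus Sturm's theorem for (b); your scheme, if completed, would be shorter and more transparent. For part (a) your plan is plausible (your $\alpha=0$ collapse to $(4-p)p(p-1)^2\le 2(5p-8)$ is exactly right, and the limiting case $s\to0$ still works for $\alpha=\frac1{19}$), although the final rational inequality at $\alpha=\frac1{19}$ is asserted rather than carried out.

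The genuine gap is part (b). The only concrete estimate you name there, Lemma \ref{lower bound1}(a), provably fails on your reduced inequality at $p=3+2\alpha$ for every $\alpha>0$: at $s=1$ it gives the bound $\frac{4+3\alpha}{2(2+\alpha)}$, and the requirement $\frac{4+3\alpha}{2(2+\alpha)}\le\frac{1+\alpha}{1+2\alpha}$ amounts to $5\alpha+4\alpha^2\le0$; by continuity it fails on a whole left neighbourhood of $3+2\alpha$, not just at the endpoint. Your proposed fix, "sharpen to the exact value $B(\cdot,2)=\frac{1}{x(x+1)}$ near $p=3+2\alpha$", is not an estimate on an interval, and no replacement bound for $s$ slightly below $1$ is supplied. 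Note also that the diagnosis in your last paragraph is partly off: the reduced inequality itself has slack about $\frac1{10}$ at $p=3+2\alpha$ (since $B\big(\frac{2+\alpha}{3+2\alpha},2\big)=\frac{(3+2\alpha)^2}{(2+\alpha)(5+3\alpha)}\approx 0.9$ versus $\frac{1+\alpha}{1+2\alpha}\approx 1$), so the vanishing slack is an artifact of Lemma \ref{lower bound1}(a) at $\alpha=0$, not intrinsic; what is missing is a sufficiently sharp upper bound for $B\big(\frac{2+\alpha}{p},2p-4\alpha-4\big)$ on $\frac52+2\alpha\le p\le 3+2\alpha$, which is precisely the role played in the paper by the second-degree Taylor estimate and the Sturm-theorem verification. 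Relatedly, neither threshold $\frac1{19}$ nor $\frac1{15000}$ is actually tied to any computation in your proposal; as written, the claim that your tightness analysis "pins down" $\alpha\le\frac1{15000}$ is unsubstantiated. Until a concrete Beta bound valid on all of $[\frac52+2\alpha,3+2\alpha]$ is produced and checked against $\frac{\alpha+1}{(p-2-2\alpha)(4+4\alpha-p)}$ for the stated $\alpha$, part (b) is not proved.
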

\begin{proof}
	We will begin with (a). By Lemma \ref{WC2} we want to show that
	\begin{equation}\label{small p condition}
		\begin{split} S(\alpha, p):= 
			&\frac{1}{2p-4\alpha-4}-\frac{1}{(2p-4\alpha-4)^2}\frac{1}{B\left(\frac{2+\alpha}{p},2p-4\alpha-4\right)}\\
			&-\alpha\left(\frac{1}{2p-4\alpha}-\frac{1}{(2p-4\alpha)^2}\frac{1}{B\left(\frac{2+\alpha}{p},2p-4\alpha\right)}\right)\\
			&+\frac{\alpha(\alpha-1)}{2}\left(\frac{1}{2p-4\alpha+4}-\frac{1}{(2p-4\alpha+4)^2}\frac{1}{B\left(\frac{2+\alpha}{p},2p-4\alpha+4\right)}\right)\\
			&-\frac{1}{4(\alpha+1)} \leq 0.
		\end{split}
	\end{equation}
	Since $\alpha \geq 0$ and $2+2\alpha < p < \frac{5}{2} + 2\alpha$, we have by Lemma \ref{11}
	\begin{equation*}
		\begin{split}
			S(\alpha,p) &\leq \frac{1}{2p-4\alpha-4}\left[1-\frac{(2+\alpha)}{p}\frac{(1+(2p-4\alpha-4)\frac{2+\alpha}{p})}{2p-4\alpha-4+\frac{2+\alpha}{p}}\right]\\
					&-\frac{\alpha}{2p-4\alpha}\left[1-\frac{(2+\alpha)}{p}\frac{(1+(2p-4\alpha)\frac{2+\alpha}{p})}{2p-4\alpha+\frac{2+\alpha}{p}}\right]\\
					&+\frac{\alpha(\alpha-1)}{2(2p-4\alpha+4)}\left[1-\frac{(2+\alpha)}{p}\frac{(1+(2p-4\alpha+4)\frac{2+\alpha}{p})}{2p-4\alpha+4+\frac{2+\alpha}{p}}\right] -\frac{1}{4(\alpha+1)}.
		\end{split}
	\end{equation*}
	By simplifying the above expression we get the following estimate
	\begin{equation*}
		S(\alpha, p) \leq \frac{g_0(p) + h(\alpha, p)}{u(\alpha, p)},
	\end{equation*}
	where
	\begin{equation*}
		\begin{split}
			g_0(p) &= 64 + 136 p + 112 p^2 + 88 p^3 + 32 p^4 - 40 p^5 - 16 p^6 + 8 p^7;\\
			u(\alpha,p) &= 4 (1 + \alpha) p (-2 - \alpha + 4 \alpha p - 2 p^2) (-2 - \alpha - 4 p + 4 \alpha p -2 p^2) (-2 - \alpha + 4 p + 4 \alpha p - 2 p^2)
		\end{split}
	\end{equation*}
	and
	\begin{equation*}
			\begin{split}
				h(\alpha, p) &= \alpha^7 (2 - 16 p + 32 p^2) + \alpha^6 (12 - 72 p + 104 p^2 - 32 p^3)\\
						&+ \alpha^5 (14 - 32 p - 2 p^2 - 64 p^3 - 24 p^4)\\
						&+ \alpha^4 (-44 + 248 p - 236 p^2 + 104 p^3 + 40 p^4 + 32 p^5)\\
						 &+ \alpha^3 (-112 + 353 p - 162 p^2 + 288 p^3 - 72 p^4 - 48 p^5 - 8 p^6)\\
						&+ \alpha^2 (-32 + 134 p - 28 p^2 + 174 p^3 - 120 p^4 + 64 p^5 + 16 p^6)\\
						 &+ \alpha (96 + 140 p + 72 p^2 + 72 p^3 +28 p^5 - 40 p^6).
			\end{split}
	\end{equation*}
	Observing that $g_0(p) = 8 (1 + p)^2 (1 + p^2) (8 + p - 4 p^2 + p^3)$ and $p^3-4p^2+p+8 > 1$ we have for $2+2\alpha < p < \frac{5}{2}+2\alpha$ that
	\begin{equation*}
		g_0(p) \geq  8 (1 + p)^2 (1 + p^2) > 8(1+2)^2(1+2^2) = 360.
	\end{equation*}
	It is also easily seen that $u(\alpha, p) < 0$ on the interval $2+2\alpha < p < \frac{5}{2}+2\alpha$. We now turn our attention to the $h(\alpha, p)$ term. Our aim is to show that $h(\alpha, p)$ is non-increasing with respect to both its parameters. We begin by showing it is non-increasing with respect to the parameter $p$.
	\begin{equation*}
		\begin{split}
			h'(\alpha, p) &= \alpha^7 (-16 + 64 p) + \alpha^6 (-72 + 208 p - 96 p^2)\\
					&+ \alpha^5 (-32 - 4 p - 192 p^2 - 96 p^3)\\
					&+ \alpha^4 (248 - 472 p + 312 p^2 + 160 p^3 + 160 p^4)\\
					&+ \alpha^3 (353 - 324 p + 864 p^2 - 288 p^3 - 240 p^4 - 48 p^5)\\
					&+ \alpha^2 (134 - 56 p + 522 p^2 - 480 p^3 + 320 p^4 + 96 p^5)\\
					&+ \alpha (140 + 144 p + 216 p^2 + 140 p^4 - 240 p^5).
		\end{split}
	\end{equation*}
	Proceeding like this and differentiating four times we arrive at
	\begin{equation*}
		\begin{split}
			h^{(4)}(\alpha, p) &= -2880 \alpha^2 - 1728 \alpha^3 + 960 \alpha^4 - 576 \alpha^5 + 3360 \alpha p + 7680 \alpha^2 p\\
						&-5760 \alpha^3 p + 3840 \alpha^4 p - 14400 \alpha p^2 + 5760 \alpha^2 p^2 - 2880 \alpha^3 p^2\\
						&< -2880 \alpha^2 - 1728 \alpha^3 + 960 \alpha^4 - 576 \alpha^5\\
						&+(3360\alpha +7680\alpha^2+3840\alpha^4-14400\alpha+5760\alpha^2-2880\alpha^3)p^2\\
						&=-2880\alpha^2-1728\alpha^3+960\alpha^4-576\alpha^5\\
						&+(-11040\alpha +13440\alpha^2-2880\alpha^3+3840\alpha^4)p^2 < 0,
		\end{split}
	\end{equation*}
	since the coefficient of $p^2$ inside the parentheses is negative for small $\alpha$. 
	Hence, $h^{'''}(\alpha, p)$ is non-increasing and we get
	\begin{equation*}
		\begin{split}
			h'''(\alpha, p) &< h'''(\alpha, 2+2\alpha) = -12 \alpha (2604 + 6313 \alpha + 4864 \alpha^2 + 1356 \alpha^3 + 288 \alpha^4 + 112 \alpha^5)\\
					&<0. 
		\end{split}
	\end{equation*}
	Thus, $h''(\alpha, p)$ is non-increasing and further
	\begin{equation*}
		\begin{split}
			h''(\alpha, p) &< h''(\alpha, 2+2\alpha) = -4 \alpha (3428 + 12076 \alpha + 15399 \alpha^2 + 8830 \alpha^3\\
					&+ 2313 \alpha^4 + 332 \alpha^5 + 48 \alpha^6)\\
					&<0.
		\end{split}
	\end{equation*}
	Therefore, $h'(\alpha, p)$ is non-increasing so
	\begin{equation*}
		\begin{split}
			h'(\alpha, p) &< h'(\alpha, 2+2\alpha) = -\alpha (4148 + 20962 \alpha + 38631 \alpha^2 + 35128 \alpha^3\\
					&+ 16600 \alpha^4 + 3664 \alpha^5 + 240 \alpha^6)\\
					&< 0.
		\end{split}
	\end{equation*}
	Hence, $h(\alpha, p)$ is non-increasing and
	\begin{equation*}
		\begin{split}
		h(\alpha, p) \geq h(\alpha, \frac{5}{2}+2\alpha) &= -\frac{1}{8}\alpha (40082 + 191376 \alpha + 349873 \alpha^2 + 341132 \alpha^3\\
										& + 197168 \alpha^4 + 65648 \alpha^5 + 9776 \alpha^6).
		\end{split}
	\end{equation*}
	From the above we then see that $h(\alpha, \frac{5}{2}+2\alpha)$ is non-increasing and so
	\begin{equation*}
		h(\alpha, p)  \geq h(\alpha, \frac{5}{2}+2\alpha) \geq h(\frac{1}{19}, \frac{5}{2}+\frac{2}{19}) \approx -336.677,
	\end{equation*}
	for all $2+2\alpha < p < \frac{5}{2}+2\alpha$ and $0 < \alpha \leq \frac{1}{19}$. Recall that
	\begin{equation*}
		S(\alpha, p) \leq \frac{g_0(p) + h(\alpha, p)}{u(\alpha,p)},
	\end{equation*}
	where $g_0(p) > 360$ and $u(\alpha, p) < 0$ on $2+2\alpha < p < \frac{5}{2}+2\alpha$. Together with $h(\alpha, p) \geq -336.677$ we get
	\begin{equation*}
		S(\alpha, p) < 0
	\end{equation*}
	and so condition (\ref{Condition}) holds by Lemma \ref{WC2}. Part (b) can be proved in a similar way as part (a). In \eqref{small p condition}, one can estimate from above the term $(1-t)^{2p-4\alpha-5}$ by its second-degree Taylor polynomial when $2p-4\alpha-5 \in [0,1]$, i.e., $5/2+2\alpha \le p \le 3+2\alpha$. We get the following upper estimate
	\begin{equation*}
		S(\alpha, p) \leq \frac{f(p) + h(\alpha, p)}{u(\alpha, p)},
	\end{equation*}
	where $u(\alpha, p) < 0$, $h(\alpha, p) \leq 453.23$ and $f(p) < -473.67$. The only difficult part is showing that $f(p) < -473.67$; one way of doing this is by using Sturm's theorem, see \cite{9}.
\end{proof}

Now we concentrate on the $\alpha = 1$ case and so we investigate $p$ when $4 < p < 3+\sqrt{7.5}$. The $\alpha = 1$ case is of interest since in this case the two expressions in Lemma \ref{WC2} are identical, and therefore this case is an example of when condition (\ref{Condition}) does not hold for all $2+2\alpha < p < 2+\alpha +\sqrt{\alpha^2+\frac{7}{2}\alpha +3}$. Note that by Lemma \ref{Condition1} the expression in Lemma \ref{WC2} is the same as 
$$\frac{1}{2B\left(\frac{3}{p},1-\frac{3}{p}\right)}F_{1,p}(0),$$ 
where $F_{1,p}(0)$ is defined in Lemma \ref{ML1}. Hence, by using properties of the Gamma function, we may write:
\begin{equation*}
	\begin{split}
		F_{1,p}(0) =&\frac{1}{2}B\left(\frac{3}{p}, 1-\frac{3}{p}\right)\Bigg[\frac{2}{p-4} - \frac{2}{p-2} -\frac{1}{2} -\frac{1}{(p-4)^2 B\left(\frac{3}{p},2p-8\right)}\\
		&+ \frac{1}{B\left(\frac{3}{p},2p-8\right)}\frac{(2p-5+3/p)(2p-6+3/p)(2p-7+3/p)(2p-8+3/p)}{(p-2)^2(2p-5)(2p-6)(2p-7)(2p-8)}\Bigg].
	\end{split}
\end{equation*}
The expression in the brackets can be written
\begin{equation*}
	\begin{split}
		&\frac{2}{p-4} - \frac{2}{p-2} -\frac{1}{2} -\frac{1}{(p-4)^2B\left(\frac{3}{p}, 2p-8\right)}\\
		&+ \frac{1}{B\left(\frac{3}{p}, 2p-8\right)}\frac{(2p-5+3/p)(2p-6+3/p)(2p-7+3/p)(2p-8+3/p)}{(p-2)^2(2p-5)(2p-6)(2p-7)(2p-8)}\\
		&=\frac{1}{B\left(\frac{3}{p}, 2p-8\right)}\Bigg[\left(\frac{2}{p-4}-\frac{2}{p-2}-\frac{1}{2}\right)B\left(\frac{3}{p}, 2p-8\right) -\frac{1}{(p-4)^2}\\
		&+\frac{(2p-5+3/p)(2p-6+3/p)(2p-7+3/p)(2p-8+3/p)}{(p-2)^2(2p-5)(2p-6)(2p-7)(2p-8)}\Bigg].
	\end{split}
\end{equation*}
So in the $\alpha = 1$ case the inequality in Lemma \ref{WC2} becomes
\begin{equation*}
	\begin{split}
		&\left(\frac{2}{p-4}-\frac{2}{p-2}-\frac{1}{2}\right)B\left(\frac{3}{p}, 2p-8\right) -\frac{1}{(p-4)^2}\\
		&+\frac{(2p-5+3/p)(2p-6+3/p)(2p-7+3/p)(2p-8+3/p)}{(p-2)^2(2p-5)(2p-6)(2p-7)(2p-8)} \leq 0.
	\end{split}
\end{equation*}
Now by multiplying with the positive term $(p-4)(p-2)$, we arrive at
\begin{equation}\label{Exampleeq}
	\begin{split}
		&\frac{1}{2}(-p^2+6p)B\left(\frac{3}{p}, 2p-8\right) -\frac{p-2}{p-4}\\
		&+\frac{(2p-5+3/p)(2p-6+3/p)(2p-7+3/p)(2p-8+3/p)}{2(p-2)(2p-5)(2p-6)(2p-7)} \leq 0.
	\end{split}
\end{equation}
The following example shows that inequality (\ref{Exampleeq}) does not hold for all $4 < p < 3 + \sqrt{7.5} \approx 5.73861$.
\begin{examplebold}\label{example1}
	Let $\alpha = 1$. Then condition (\ref{Condition}) does not hold when $4 < p \leq 5.1$ but it holds when $5.5 \leq p < 5.74$.
\end{examplebold}
	The argument is split into four cases. In the first three cases we find a lower bound for the left-hand side of inequality (\ref{Exampleeq}) and show that it is positive, proving that (\ref{Exampleeq}) cannot hold. For the last case we find an upper bound for the left-hand side of (\ref{Exampleeq}) and show that it is negative, proving that (\ref{Exampleeq}) holds.
	\begin{enumerate}
		\item[\textbf{(i)}] $\mathbf{4 < p \leq 4.5}$. By using Lemma \ref{11} we have
			\begin{equation*}
				B\left(\frac{3}{p}, 2p-8\right) \geq \frac{1}{2p-8}+\frac{p}{3}-1.
			\end{equation*}
			Utilizing this and denoting $J(p)=\frac{2(p-4)(p-2)}{B\left(\frac{3}{p},1-\frac{3}{p}\right)}$, we get a lower estimate for (\ref{Exampleeq}):
			\begin{equation*}
				\begin{split}
					J(p)F_{1,p}(0) &\geq \frac{1}{2}(-p^2+6p)\left(\frac{1}{2p-8}+\frac{p}{3}-1\right) -\frac{p-2}{p-4}\\
							&+\frac{(2p-5+3/p)(2p-6+3/p)(2p-7+3/p)(2p-8+3/p)}{2(p-2)(2p-5)(2p-6)(2p-7)}\\
							&=\frac{1}{2}(-p^2+6p)\frac{2p^2-14p+27}{6(p-4)}-\frac{p-2}{p-4}\\
							&+\frac{(2p-5+3/p)(2p-6+3/p)(2p-7+3/p)(2p-8+3/p)}{2(p-2)(2p-5)(2p-6)(2p-7)} :=S(p).
				\end{split}
			\end{equation*}
			Define $f(p) = \frac{1}{12}(-6-39p+18p^2-2p^3)$. Then $f(p) \geq f(4) = -\frac{1}{6}$. For the remaining term we obtain
			\begin{equation*}
				\begin{split}
					S(p) &=f(p) + \frac{(2p-8+3/p)}{2(p-2)}\left(1+\frac{3}{(2p-5)p}\right)\left(1+\frac{3}{(2p-6)p}\right)\left(1+\frac{3}{(2p-7)p}\right).
				\end{split}
			\end{equation*}
			Now we get a lower bound for $S(p)$, namely
			\begin{equation*}
				\begin{split}
					 	&f(4) + \frac{(2\cdot4-8+3/4)}{2(4.5-2)}\left(1+\frac{3}{(2\cdot4.5-5)\cdot4.5}\right)\\
						&\left(1+\frac{3}{(2\cdot4.5-6)\cdot4.5}\right)\left(1+\frac{3}{(2\cdot4.5-7)\cdot4.5}\right) \approx -\frac{1}{6} + 0.285185 > 0.
				\end{split}
			\end{equation*}
			So we have that $F_{1,p}(0) \geq \frac{1}{J(p)}S(p) > 0$ for all $4 < p \leq 4.5$.
		\item[\textbf{(ii)}] $\mathbf{4.5 < p \leq 4.9}$.
			Again by Lemma \ref{lower bound1} we get
			\begin{equation*}
				B\left(\frac{3}{p}, 2p-8\right) \geq \frac{p}{6(p-4)}\frac{2p^2-8p+3}{7p-24}.
			\end{equation*}
			We now get a lower estimate for $J(p)F_{1,p}(0)$:
			\begin{equation*}
				\begin{split}
						&\frac{144-78p-3p^2+12p^3-2p^4}{-288+84p}\\
						&+\frac{2p-8+3/p}{2(p-2)}\left(1+\frac{3}{(2p-5)p}\right)\left(1+\frac{3}{(2p-6)p}\right)\left(1+\frac{3}{(2p-7)p}\right)\\
						&= f(p) + \frac{(2p-8+3/p)}{2(p-2)}\left(1+\frac{3}{(2p-5)p}\right)\left(1+\frac{3}{(2p-6)p}\right)\left(1+\frac{3}{(2p-7)p}\right),
				\end{split}
			\end{equation*}
			where
			\begin{equation*}
				 f(p) = \frac{144-78p-3p^2+12p^3-2p^4}{-288+84p}.
			\end{equation*}
			The function $f(p)$ is non-increasing on the interval $4.5 < p \leq 4.9$, so it holds that $f(p) \geq f(4.9)\approx -0.415875$. For the second term we have
			\begin{equation*}
				\begin{split}
					&\frac{(2p-8+3/p)}{2(p-2)}\left(1+\frac{3}{(2p-5)p}\right)\left(1+\frac{3}{(2p-6)p}\right)\left(1+\frac{3}{(2p-7)p}\right) \geq\\
					&\frac{(2\cdot4.5-8+3/4.5)}{2(4.9-2)}\left(1+\frac{3}{(2\cdot4.9-5)\cdot4.9}\right)\left(1+\frac{3}{(2\cdot4.9-6)\cdot4.9}\right)\\
					&\left(1+\frac{3}{(2\cdot4.9-7)\cdot4.9}\right)\approx 0.458474.
				\end{split}
			\end{equation*}
			Combining these two we get $F_{1,p}(0) \geq \frac{0.042599}{J(p)} >0$ 
			for $4.5 < p \leq 4.9$.
		\item[\textbf{(iii)}] $\mathbf{4.9 < p \leq 5.1}$. In this case we use the same estimate as in \textbf{(ii)}. Now we get
			\begin{equation*}
				\begin{split}
					J(p)F_{1,p}(0) &\geq (2p-8+3/p)\Bigg[\frac{(144-78p-3p^2+12p^3-2p^4)}{(-288+84p)(2p-8+3/p)}\\
							&+\frac{1}{2(p-2)}\left(1+\frac{3}{(2p-5)p}\right)\left(1+\frac{3}{(2p-6)p}\right)\left(1+\frac{3}{(2p-7)p}\right)\Bigg],
				\end{split}
			\end{equation*}
			where $(2p-8+3/p) > 0$ and 
			\begin{equation*}
				\begin{split}
					g(p) &:=\frac{1}{2(p-2)}\left(1+\frac{3}{(2p-5)p}\right)\left(1+\frac{3}{(2p-6)p}\right)\left(1+\frac{3}{(2p-7)p}\right)\\
						& \geq g(5.1)\approx 0.242306.
				\end{split}
			\end{equation*}
			So it remains to be shown that
			\begin{equation*}
				h(p):=\frac{(144-78p-3p^2+12p^3-2p^4)}{(-288+84p)(2p-8+3/p)}
			\end{equation*}
			is a non-increasing function on the interval $4.9 < p < 5.1$. By taking the derivative we have
			\begin{equation*}
				h'(p) = -\frac{5184-5616p+495p^2+4383p^3-4896p^4+2100p^5-396p^6+28p^7}{6(24-7p)^2(3-8p+2p^2)^2}.
			\end{equation*}
			Since the denominator is positive it remains to show that the nominator is positive on the aforementioned interval. Set
			\begin{equation*}
				g_1(p) = 5184-5616p+495p^2+4383p^3-4896p^4+2100p^5-396p^6+28p^7.
			\end{equation*}
			Then
			\begin{equation*}
				\begin{split}
					&g_1'(p) = -5616 + 990p+13149p^2-19584p^3+10500p^4-2376p^5+196p^6;\\
					&g_1''(p) = 990 + 26298p-58752p^2+42000p^3-11880p^4+1176p^5;\\
					&g_1'''(p) = 26298-117504p+126000p^2-47520p^3+5880p^4;\\
					&g_1^{(4)}(p) = -117504+252000p-142560p^2+23520p^3;\\
					&g_1^{(5)}(p) = 252000-284120p+70560p^2;\\
					&g_1^{(6)}(p) = -285120 + 141120p > g_1^{(6)}(4.9) > 0.
				\end{split}
			\end{equation*}
			Hence, $g_1^{(5)}(p)$ is non-decreasing and we have $g_1^{(5)}(p) \geq g_1^{(5)}(4.9) > 0$. Therefore, $g_1^{(4)}(p)$ is non-decreasing and so $g_1^{(4)}(p) \geq g_1^{(4)}(4.9) > 0$. Continuing like this we finally obtain that $g_1'(p)$ is non-decreasing and $g_1'(4.9) > 0$. Hence $g_1(p) \geq g_1(4.9) > 0$. Thus $h'(p) < 0$ and we get that 					$h(p) \geq h(5.1) \approx -0.237716$. Combining our results we now have
			\begin{equation*}
				F_{1,p}(0) \geq \frac{1}{J(p)}(h(5.1) + g(5.1)) \approx \frac{0.004590}{J(p)} > 0. 
			\end{equation*}
		\item[\textbf{(iv)}] $\mathbf{5.5 \leq p < 5.74}$. By plotting inequality (\ref{Exampleeq}) in Mathematica it is shown to hold for the values $5.25 \leq p < 5.74$. It is, however, difficult to prove for all these values and therefore we will prove it for the values $5.5 \leq p < 5.74$. We begin by using
			\begin{equation*}
				B\left(\frac{3}{p}, 2p-8\right) \leq \frac{1}{2p-8}+\frac{p}{3}-1.
			\end{equation*}
			In the same way as in \textbf{(i)} we now get the upper estimate:
			\begin{equation*}
				\begin{split}
					J(p)F_{1,p}(0)&\leq \frac{1}{2}(-p^2+6p)\left(\frac{1}{2p-8}+\frac{p}{3}-1\right) -\frac{p-2}{p-4}\\
							&+\frac{(2p-5+3/p)(2p-6+3/p)(2p-7+3/p)(2p-8+3/p)}{2(p-2)(2p-5)(2p-6)(2p-7)}\\
							&=\frac{1}{2}(-p^2+6p)\frac{2p^2-14p+27}{6(p-4)}-\frac{p-2}{p-4}\\
							&+\frac{(2p-5+3/p)(2p-6+3/p)(2p-7+3/p)(2p-8+3/p)}{2(p-2)(2p-5)(2p-6)(2p-7)}.
				\end{split}
			\end{equation*}
			Set 
			\begin{equation*}
				\begin{split}
					f(p) &= \frac{1}{2}(-p^2+6p)\left(\frac{1}{2p-8}+\frac{p}{3}-1\right) -\frac{p-2}{p-4}\\
						&= \frac{1}{12}(-6-39p+18p^2-2p^3).
				\end{split}
			\end{equation*}
			Now we write
			\begin{equation*}
				S(p) =f(p) + \frac{(2p-8+3/p)}{2(p-2)}\left(1+\frac{3}{(2p-5)p}\right)\left(1+\frac{3}{(2p-6)p}\right)\left(1+\frac{3}{(2p-7)p}\right).
			\end{equation*}
			By \textbf{(i)} we see that $f$ is non-increasing on the interval $5.5 < p < 5.74$. By dividing the interval $[5.5, 5.74]$ into two intervals $I_1 = [5.5, 5.62]$ and $I_2 = [5.62, 5.74]$, we show that $S(p) \leq 0$. First we consider $I_2$:
			\begin{equation*}
				\begin{split}
					S(p) &\leq f(5.62) + \frac{(2\cdot5.74-8+3/5.74)}{2(5.62-2)}\left(1+\frac{3}{(2\cdot5.62-5)\cdot5.62}\right)\\
						&\left(1+\frac{3}{(2\cdot5.62)-6)\cdot5.62}\right)\left(1+\frac{3}{(2\cdot5.62)-7)\cdot5.62}\right)\\
						&\approx -0.227916 < 0.
				\end{split}
			\end{equation*}
			Now for $I_1$ we further divide it into two intervals $I'_1 = [5.5,5.56]$ and $I''_1 = [5.56, 5.62]$. For $I''_1$ we have
			\begin{equation*}
				\begin{split}
					S(p) &\leq f(5.56) + \frac{(2\cdot5.62-8+3/5.62)}{2(5.56-2)}\left(1+\frac{3}{(2\cdot5.56-5)\cdot5.56}\right)\\
						&\left(1+\frac{3}{(2\cdot5.56)-6)\cdot5.56}\right)\left(1+\frac{3}{(2\cdot5.56)-7)\cdot5.56}\right)\\
						&\approx -0.125168 < 0.
				\end{split}
			\end{equation*}
			Finally, for the interval $I'_1$ we have
			\begin{equation*}
				\begin{split}
					S(p) &\leq f(5.5) + \frac{(2\cdot5.56-8+3/5.56)}{2(5.5-2)}\left(1+\frac{3}{(2\cdot5.5-5)\cdot5.5}\right)\\
						&\left(1+\frac{3}{(2\cdot5.5)-6)\cdot5.5}\right)\left(1+\frac{3}{(2\cdot5.5)-7)\cdot5.5}\right)\\
						&\approx -0.010372 < 0,
				\end{split}
			\end{equation*}
			and so $S(p) < 0$ on $5.5 < p < 5.74$. Since $F_{1,p}(0) \leq \frac{1}{J(p)}S(p)$, we see that (\ref{Exampleeq}) holds.
	\end{enumerate}

\noindent \textbf{Acknowledgements.} The first and the second authors were  partially supported by the Academy of Finland project 296718. The last author is grateful for the financial support from the Doctoral Network in Information Technologies and Mathematics at \AA bo Akademi University.

\end{document}